\documentclass{siamart251216}

\makeatletter
\@ifundefined{MSCcodes}{%
  \newenvironment{MSCcodes}{\begin{AMS}}{\end{AMS}}%
}{}
\makeatother

\usepackage[numbers]{natbib}
\usepackage{amsmath,amssymb,amsfonts,mathtools}

\newsiamremark{remark}{Remark}
\newsiamremark{example}{Example}
\newsiamthm{assumption}{Assumption}

\newcommand{\R}{\mathbb{R}}

\newcommand{\Npos}{\mathbb{N}_{>0}}  
\newcommand{\norm}[1]{\left\lVert#1\right\rVert}

\title{Penalised and constrained geodesics \\ in geometric control theory\thanks{\textbf{Funding:} This work was partly funded by the European Union under the project ROBOPROX (reg.\ no.\ CZ.02.01.01/00/22\_008/0004590).}}

\author{Rufus Lawrence\thanks{Faculty of Electrical Engineering, Czech Technical University in Prague, Karlovo N\'{a}m\v{e}st\'{i} 13, Prague, 11\,000, Czech Republic (\email{lawrejam@fel.cvut.cz}).}
\and Ale\v{s} Wodecki\footnotemark[2]
\and Johannes Aspman\footnotemark[2]
\and Jakub Mare\v{c}ek\footnotemark[2]}

\headers{Penalised and constrained geodesics}{R. Lawrence, A. Wodecki, J. Aspman, and J. Mare\v{c}ek}

\begin{document}

\maketitle

\begin{abstract}
In many problems in optimal control, one seeks to minimise an objective function subject to constraints on the velocity of the system. The ``hard-constrained approach'' (imposing the constraints directly) is often analytically and computationally challenging. A natural alternative is to penalise violations of the constraints, solving a sequence of ``soft-constrained'' problems indexed by a penalty parameter, in the hope that their solutions converge to a solution of the hard-constrained problem as the penalty tends to infinity. We justify this approach for a broad class of geometric control problems on a complete Riemannian manifold, in which the velocity constraint is encoded by a bracket-generating subbundle of the tangent bundle. In the absence of drift, the control problem reduces to that of finding a curve of minimal length or energy between two points, and we show that every sequence of solutions to the soft-constrained problems admits a subsequence converging, strongly in the Sobolev topology, to a solution of the hard-constrained problem. Finally, we show how to transform a broad class of optimal control problems with drift into problems of this geodesic type, by trivialising the dynamics using a change of coordinates inspired by the interaction picture of quantum mechanics.
\end{abstract}

\begin{keywords}
Riemannian, sub-Riemannian, vakonomic, geodesic, $\Gamma$-convergence, optimal control, geometric control, quantum control
\end{keywords}

\begin{MSCcodes}
49J15, 49J45, 53C17, 81Q93
\end{MSCcodes}

\section{Introduction}\label{sec:introduction}

Let $A$ denote a set, let $S \subset A$, and let $f: A \to \R$. Suppose we want to solve the minimisation problem
\begin{align}
\begin{split}
    \underset{x \in A}{\min}\; f(x) \\
    \text{s.t.}\;x \in S.
\end{split}
\end{align}
The obvious solution is simply to forget that $f$ was ever defined on $A$, and instead work with its restriction $f|_S:S \to \R$. This approach is referred to as the \textit{hard-constrained} problem, and is often impractical, particularly when $A$ and $f$ possess certain ``nice'' properties (e.g. affineness, convexity) which $S$ and $f|_S$ do not. Typically, one overcomes this problem by introducing a Lagrange multiplier in the objective function to enforce the constraints. 

An alternative approach is to add a term $\psi(x)$ to the objective function that penalises points $x \in A$ lying far away (in some appropriate sense) from $S$:
\begin{align}
    \underset{x \in A}{\min}\; f(x) + q\psi(x),
\end{align}
where $q > 0$ is a penalty parameter. Minimising the modified (penalised) objective function is referred to as the \textit{soft-constrained} problem. Soft constraints are ubiquitous in optimisation and machine learning. The most common examples include weight decay in the training of neural networks \citep{krogh1991simple} and physics-informed neural networks, which incorporate physical constraints via an additional term in the loss function \citep{RAISSI2019686}. Soft constraints have also been implemented in the context of collision avoidance in motion planning \citep{zucker2013chomp}, where rather than enforcing a strict distance constraint, ``repulsive fields'' are included in the dynamics which discourage but do not prohibit collision. 

A natural question to ask is under what circumstances the solutions of the soft-constrained problem converge to solutions of the hard-constrained problem as the penalty parameter $q$ tends to infinity. The answer to this question is often affirmative, but in general depends on the properties of $A,S,f$ and $\psi$. For the finite-dimensional case, the convergence is discussed in Chapter 4 of Bertsekas \cite{bertsekas1999nonlinear} as well as Chapter 17 of Nocedal and Wright \cite{nocedal2006numerical}. 

When both $A$ and $S$ are infinite dimensional, the question becomes more delicate. If there is a natural choice of topology on $A$, the concept of $\Gamma$-convergence, first introduced by De Giorgi \cite{DeGiorgi}, is a powerful tool (see Definition \ref{def:gammaconvergence}). If we define
\begin{align}
    \tilde{f}(x) = 
     \begin{cases} 
     f(x) \;\text{if}\; x \in S \\
      \infty \;\text{otherwise},
      \end{cases}
\end{align}
a sufficient condition for solutions of the soft-constrained problem to converge to solutions of the hard-constrained problem is that $f+q\psi \overset{\Gamma}{\to} \tilde{f}$ on $A$ as $q \to \infty$. This is a direct result of the \textit{fundamental theorem of $\Gamma$-convergence} (see Theorem \ref{thm:fundamentalgamma}). It should be noted that pointwise convergence of $f+q\psi$ to $\tilde{f}$ is not enough to guarantee that solutions of the soft-constrained problem converge to solutions of the hard-constrained problem. 

In this paper, we study the control of a first-order ordinary differential equation on a Riemannian manifold $(M,g)$, with autonomous dynamics generated by a vector field $X$ on $M$, and subject to a nonholonomic velocity constraint on the component of the velocity corresponding to the control, as well as initial and terminal equality constraints on the trajectory. The quantity we want to minimise is the integral of the kinetic energy imparted by the control field. The hard-constrained problem can be formulated as 
\begin{align}
\begin{split}
    &\underset{(Y, \gamma)}\min \int_0^1g(Y(t,\gamma(t)),Y(t, \gamma(t)))\,dt \\
    \text{subject to}\;&\dot{\gamma}(t) = X(t, \gamma(t)) + Y(t, \gamma(t))  \\
    &\gamma(0) = x,\, \gamma(1) = y,\, Y(t, \gamma(t)) \in D_{\gamma(t)}.
\end{split}
\end{align}
Here, $X$ is a globally defined vector field describing the intrinsic dynamics of the system, $Y$ is a vector field along $\gamma$ that we think of as a control field, and $D$ is a bracket-generating subbundle of $TM$ which encodes the constraint on the control. 

The natural formulation of the soft-constrained problem is obtained by ``deforming'' the metric $g$ to penalise motion perpendicular to the constraint bundle. Letting $P$ and $P^\perp$ denote the projections from $TM$ to $D$ and its orthogonal complement $D^\perp$ respectively, we define
\begin{equation}
    g_q(X,Y) = g(PX, PY) + qg(P^\perp X, P^\perp Y).
\end{equation}
We may now formulate the soft-constrained problem as
\begin{align}
\begin{split}
    &\underset{(Y, \gamma)}\min \int_0^1g_q(Y(t,\gamma(t)),Y(t, \gamma(t)))\,dt \\
    \text{subject to}\;&\dot{\gamma}(t) = X(t, \gamma(t)) + Y(t, \gamma(t))  \\
    &\gamma(0) = x,\, \gamma(1) = y.
\end{split}
\end{align}
It turns out that, in the appropriate (Sobolev) topology, accumulation points of solutions to the soft-constrained problem are solutions to the hard-constrained problem. This is shown in Theorem \ref{thm:maincontroltheorem}. 

We first begin by studying the case where $X$ is zero. The above optimisation problems thus reduce to the problem of finding a length-minimising curve from $x$ to $y$ in $M$. In the hard-constrained case, we require that $\dot{\gamma}(t) \in D_{\gamma(t)}$, and such a curve is a sub-Riemannian geodesic with respect to the sub-Riemannian structure induced by $g|_D$. In the soft-constrained case, the length-minimising curve is an unconstrained geodesic with respect to $g_q$. We show, using the techniques of $\Gamma$-convergence, that any convergent sequence of geodesics of $g_q$ converges (with respect to the Sobolev topology) to a sub-Riemannian geodesic with respect to $(D, g|_D)$. While this result is intuitively unsurprising, the proof is technical, and makes up the bulk of this paper. 

In section \ref{sec:vakanomic}, we show how, under certain natural symmetry assumptions on $X(t, \cdot)$, $g$ and $D$, the optimal control problem can be reduced to the case where $X \equiv 0$. We further show that even in the absence of these symmetry assumptions, solutions to the soft-constrained problem still converge to solutions to the hard-constrained problem, even though the problem no longer reduces to the problem of finding geodesics.  

\section{Technical overview}
In this section, we fix notation and recall the technical background needed for the proofs. 

\subsection{Setup}
Let $M$ denote a Riemannian manifold with metric $g$ on $TM$. We assume throughout that $(M,g)$ is connected and complete. Let $D$ be a bracket-generating subbundle of the tangent bundle $TM$. The metric $g$ induces a splitting $TM \cong D \oplus D^\perp$, where $D^\perp$ denotes the orthogonal complement of $D$. We denote by $P$ and $P^\perp$ the orthogonal projections onto $D$ and $D^\perp$ respectively. 

We introduce a family of metrics indexed by $q \in \Npos$ by setting
\begin{equation}\label{eq:gq}
    g_q(X,Y) = g(PX, PY) + qg(P^\perp X, P^\perp Y)
\end{equation}
for tangent vectors $X$ and $Y$. This rescales the lengths of tangent vectors in $D^\perp$ by a factor of $\sqrt{q}$. Heuristically, we should think of this as making movement in directions perpendicular to $D$ more costly. Since $g \leq g_q \leq q\,g$, the induced distances satisfy $d_g \leq d_q \leq \sqrt{q}\,d_g$, where $d_q$ denotes the distance induced by $g_q$; in particular each $(M,g_q)$ is again complete. 

\begin{remark}\label{rem:integerq}
We take the penalty parameter to range over the positive integers, $q \in \Npos = \{1,2,3,\dots\}$. This is a matter of convenience rather than substance: it makes the family $\{E_q\}$ introduced below a genuine sequence, so that the sequential language of $\Gamma$-convergence and of subsequence extraction in Section \ref{sec:convergence} applies verbatim. Nothing in what follows depends on integrality: every statement and proof remains valid for real $q \geq 1$, with $q \to \infty$ understood along an arbitrary sequence.
\end{remark}

Associated to $g_q$, there is an energy functional $E_q$ acting on continuously differentiable curves $\gamma:[0,1] \to M$, defined by
\begin{equation}
    E_q(\gamma) = \frac{1}{2}\int_0^1g_q(\dot{\gamma}(t), \dot{\gamma}(t))\,dt,
\end{equation}
and a length functional 
\begin{equation}
    l_q(\gamma)= \int_0^1\sqrt{g_q(\dot{\gamma}(t), \dot{\gamma}(t))}\,dt.
\end{equation}

Standard results in Riemannian geometry tell us that curves with fixed endpoints $x$ and $y$ minimising $E_q$ also minimise $l_q$, and that curves minimising $l_q$ will minimise $E_q$ upon constant speed reparametrisation. Since $(M,g_q)$ is complete, such minimisers exist for any pair of endpoints by the Hopf--Rinow theorem. In the convergence analysis of Section \ref{sec:convergence} we work throughout with the energy functional $E_q$; the length functional enters only as the geometric object whose minimisers are the sub-Riemannian geodesics. 

We may also define two ``limit'' functionals:
\begin{align}
      E_\infty(\gamma) =   
      \begin{cases} 
     \frac{1}{2}\int_0^1g(\dot{\gamma},\dot{\gamma})\,dt \; &\text{if}\; \dot{\gamma}(t) \in D_{\gamma(t)} \; \text{a.e.} \\
      \infty  &\text{otherwise}, \\
    \end{cases}
    \end{align}
\begin{align}
      l_\infty(\gamma) =   
      \begin{cases} 
     \int_0^1\sqrt{g(\dot{\gamma},\dot{\gamma})}\,dt \; &\text{if}\; \dot{\gamma}(t) \in D_{\gamma(t)} \; \text{a.e.} \\
      \infty  &\text{otherwise}. \\
    \end{cases}
    \end{align}

The functional $l_\infty$ is referred to as the Carnot--Carath\'{e}odory length of the path $\gamma$ in the context of sub-Riemannian geometry. Moreover, since $D$ is a bracket-generating distribution (i.e. the Lie closure of $D$ is the whole of $TM$), the Chow--Rashevskii theorem tells us that any two points $x$ and $y$ in $M$ can be connected by a curve $\gamma$ whose tangent lies in $D$ (curves satisfying this property are called \textit{horizontal}). This allows one to define the Carnot--Carath\'{e}odory distance,
\begin{align}
    d_\infty(x,y) = \underset{\gamma}{\inf}\,l_\infty(\gamma), 
\end{align}
where the infimum must now be taken over a larger class of curves of lower regularity: it is not sufficient to take this infimum over smooth (or even $C^1$) horizontal curves connecting $x$ to $y$. One approach is to work with \textit{admissible curves}, which are defined as Lipschitz curves whose derivative $\dot{\gamma}$ lies in $D$ for almost all $t$. This is the approach taken by Agrachev, Barilari, and Boscain \cite{Agrachev_Barilari_Boscain_2019}. An equivalent approach is to take the infimum over the Sobolev space $H^1([0,1],M)$ (see Definition \ref{def:sobolevcurves}), as in Lewis \cite{Lewis2020}, which is the approach we take in this paper. In either setting, $E_\infty$ is finite on a nonempty set of curves joining $x$ to $y$. 

It is a basic result in sub-Riemannian geometry that for all $x,y \in M$, $d_q(x,y) \to d_\infty(x,y)$ as $q \to \infty$ (see for instance \cite{ledonne}). However, extracting the length- or energy-minimising limit curves is harder. It is clear that the limit $\lim_{q\to\infty}g_q$ is undefined, hence the ``usual'' approach of passing to the geodesic equation will not work, since the coefficients of the geodesic equation are determined (in local coordinates) by the Christoffel symbols of the metric, which do not exist in the limit. However, we will show in the next section that for an appropriate notion of convergence, namely $\Gamma$-convergence, $E_\infty$ may be interpreted as the limit functional of $E_q$. 

\subsection{Summary of results}

In this paper, we prove the following results:

\begin{itemize}
    \item Extending $E_q$ and $E_\infty$ to the space $\mathcal{C}_{x,y}$ of continuous curves from $x$ to $y$ by setting them equal to infinity away from $H^1([0,1],M,x,y)$, we prove that $E_q \overset{\Gamma}{\to}E_\infty$ with respect to the supremum norm (Theorem \ref{thm:gammaC0}). 
    \item Letting $\{\gamma_q\}$ denote a sequence of energy-minimising Riemannian geodesics of $g_q$, we construct a sequence of geodesics $\{\gamma_{q_j}\}$, $q_j \to \infty$, which converges to a path $\gamma \in C^0([0,1],M)$ using the Arzel\`{a}--Ascoli theorem (Proposition \ref{thm:limitconstruct}). 
    \item By the fundamental theorem of $\Gamma$-convergence, it follows that this path $\gamma$ is in fact a minimiser of $E_\infty$ (Proposition \ref{thm:limitisminimiser}). 
    \item Extracting a further subsequence, we show that $\gamma_{q_j} \to \gamma$ in $H^1([0,1],M)$ (Proposition \ref{thm:limitweakdiff}).
    \item The passage to a subsequence cannot be dispensed with in general: if the constrained problem admits several minimisers, the penalised geodesics may accumulate at more than one of them. Nevertheless, we prove that for the full sequence $\{\gamma_q\}$, the distance from $\gamma_q$ to the set of minimisers of $E_\infty$ tends to zero in the $H^1$-topology. Moreover, the constraint violation $\int_0^1 g(P^\perp\dot{\gamma}_q, P^\perp\dot{\gamma}_q)\,dt$ vanishes at an explicit rate $O(q^{-1})$. If the minimiser is unique then no subsequence selection is needed at all (Propositions \ref{prop:fullfamily} and \ref{prop:aprioribound}). 
    \item In Section \ref{sec:vakanomic}, we show how to reduce a class of control problems with drift to the problem of computing a geodesic, under natural symmetry assumptions on the drift with respect to the metric $g$ and constraint bundle $D$. The main insight is to derive a geometric generalisation of the interaction picture transformation in quantum mechanics. Together with the previous results, this establishes Theorem \ref{thm:maincontroltheorem}. Finally, we sketch how to treat the case of arbitrary drift. 
\end{itemize}  

\subsection{Constrained variational calculus}\label{sec:constvar}

The standard approach to minimising integral functionals such as $E_q$ and $E_\infty$ is to use techniques from the calculus of variations to reduce the minimisation problem to the problem of solving a second-order ODE (the Euler--Lagrange equation) with Dirichlet boundary conditions. In the case of $E_q$ this is straightforward: the resulting Euler--Lagrange equations are the usual Riemannian geodesic equations for the metric $g_q$. Minimising $E_\infty$ is more subtle: there are two natural ways to impose the constraints in the variational principle, one referred to as the \textit{vakonomic} approach and the other as the \textit{nonholonomic} approach. 

Suppose $L : TM \rightarrow \R$ is $C^2$. Define the functional $F : C^2([0,1],M)\rightarrow \R$ by
\begin{equation}
    F(\gamma) = \int_0^1L(\gamma(t), \dot{\gamma}(t))\,dt.
\end{equation}
The standard approach to minimising (extremising) $F$ is to use a variational principle. In the unconstrained case, this is simple: we find $\gamma \in C^2([0,1],M)$ such that $\delta F|_\gamma(h) = 0$ on all perturbations $h$ of $\gamma$ fixing the endpoints. Imposing the velocity constraint $\dot{\gamma}(t) \in D$ is more subtle, and in fact there are two approaches. 

The \textit{vakonomic} approach is to first restrict $F$ to the space $C_{\mathrm{hor}}(M)$ of horizontal curves in $M$, and then impose the variational principle: we require that 
\begin{equation}
    \delta(F|_{C_{\mathrm{hor}}})|_\gamma(h) = 0
\end{equation}
for all infinitesimal variations $h$ of $\gamma$ in $C_\mathrm{hor}(M)$ fixing the endpoints. This approach is entirely variational in nature: once we restrict $F$ to $C_\mathrm{hor}(M)$, we no longer care that $F$ was ever defined on a larger space of curves. 

By contrast, the \textit{nonholonomic} approach is to first take the variational derivative of $F$ on the full space $C^2([0,1],M)$ and then require that 
\begin{equation}
    \delta F|_\gamma(h) = 0
\end{equation}
on \textit{infinitesimally horizontal} variations $h$ preserving the endpoints. While both approaches yield a solution $\gamma$ that is horizontal, in general, we obtain different dynamics. Heuristically, the nonholonomic variational principle yields the experimentally observed dynamics for a constrained mechanical system, while the vakonomic approach is appropriate for optimal control problems. In particular, the vakonomic variational principle is the correct one for computing normal sub-Riemannian geodesics. This corresponds to the case when $L(\gamma(t),\dot{\gamma}(t)) = \frac{1}{2}g(\dot{\gamma}(t), \dot{\gamma}(t))$, describing the dynamics of a free particle. For a more detailed explanation, see \cite{LewisMurray, Lewis2020}. 

This paper presents an alternative approach to imposing the vakonomic variational principle in the case of the free particle. Rather than imposing the constraints directly, one can instead solve the unconstrained problem with penalty factor $q$, and then take the limit as $q \to \infty$. 

\subsection{Related work}
The idea of penalising certain tangent directions is ubiquitous in control theory and mechanics. Notable examples are the \textit{complexity metrics} introduced by Nielsen and collaborators \citep{nielsen2006geometric, dowling2006} in the context of quantum control and quantum complexity theory. The same ideas have been applied to the control of noisy quantum systems in \cite{lawrence2025}. The limiting case as the penalty parameter $q$ tends to infinity has been studied by Shizume et al.\ \cite{shizume}, albeit using a different approach. 

In sub-Riemannian geometry, families of penalised metrics of the form \eqref{eq:gq} have been studied in the context of minimiser regularity. Sub-Riemannian length minimisers (geodesics) can fail to be smooth in general \citep{chitour2025, rossi2026}, even when both the bundle $D$ and the bilinear form $g$ are. By contrast, in the Riemannian setting, if the metric $g$ is smooth, then so are its geodesics; for a $C^k$ metric with $k \geq 2$, geodesics are of class $C^{k+1}$.

We also draw the reader's attention to the work of Maione et al. \cite{Maione1, Maione2} on the $\Gamma$-convergence of integral functionals depending on vector fields. Here, the authors study the convergence of minima of a family of integral functionals, working, as in this paper, in the Sobolev topology. 

It should be noted that some of the ideas presented in this paper potentially constitute a kind of mathematical ``folklore'' and may well be known to some members of the sub-Riemannian geometry community. Notably, the ideas of the proofs are similar in spirit to Section 8.9 of \cite{Agrachev_Barilari_Boscain_2019}, although in this work the authors consider families of sub-Riemannian geodesics, rather than families of Riemannian geodesics converging to a sub-Riemannian geodesic as the metric becomes singular. To the best of the authors' knowledge, these results are not published anywhere in their current form. In particular, the full statement of Theorem \ref{thm:penalisedconvergence} and its proof using $\Gamma$-convergence, as well as the results for the full sequence in Subsection \ref{sec:subsequenceselection} and the results of Section \ref{sec:vakanomic} are novel.

\begin{remark}Montgomery \cite{Montgomery2002} cautions against various false proofs of regularity/smoothness of sub-Riemannian geodesics involving families of penalised metrics of the form \eqref{eq:gq}. The idea is to take a sub-Riemannian length minimiser $\gamma$, and approximate it by a sequence of geodesics $\gamma_q$ of $g_q$. There are two problems with this approach: the first is that the convergence may not be ``nice'' enough to preserve smoothness, the second is that not every sub-Riemannian length minimiser can be obtained as a limit of Riemannian geodesics. Indeed, Agrachev and Sarychev \cite{agrachev_sarychev} note that sub-Riemannian length minimisers can be isolated in the space of curves, up to reparametrisation. Nevertheless, these caveats do not apply to our results: we do not claim that the limiting curve $\gamma$ of Proposition \ref{thm:limitconstruct} is smooth, nor do we claim that all sub-Riemannian length minimisers arise in this way. 
\end{remark}

\section{Convergence results}\label{sec:convergence} 

Before we turn to the more technical notion of $\Gamma$-convergence, it is instructive to observe that $E_\infty$ is the \textit{pointwise} limit of $E_q$, in the sense that for all $C^1$-curves $\gamma$, $E_q(\gamma) \to E_\infty(\gamma)$. To see this, note that if $\dot{\gamma}(t) \in D_{\gamma(t)}$ for all $t$, we have $E_q(\gamma) = E_\infty(\gamma)$ for all $q$. If instead $\dot{\gamma}(t_0) \notin D_{\gamma(t_0)}$ for some $t_0 \in [0,1]$, then $\dot{\gamma}(t) \notin D_{\gamma(t)}$ in some open neighbourhood of $t_0$, and hence $E_q(\gamma) \to \infty = E_\infty(\gamma)$ (Lemma \ref{lemma:monotone} below upgrades this observation to the extended functionals). Unfortunately, pointwise convergence of functionals does not interact nicely with minimisation: if $\{\gamma_q\}$ is a sequence of paths such that $\gamma_q$ is a minimiser of $E_q$, and $\gamma$ is an accumulation point of $\{\gamma_q\}$, it does not follow that $\gamma$ is a minimiser of $E_\infty$. If we wish to make this claim, we instead need to prove that $E_\infty$ is the $\Gamma$-limit of $E_q$. 

\subsection{$\Gamma$-convergence}\label{sec:gammadef}

The notion of $\Gamma$-convergence was first introduced by De Giorgi \cite{DeGiorgi}. For a more recent reference, see Braides \cite{Braides2006}. In its full generality, $\Gamma$-convergence is defined on an arbitrary topological space, via topological $\Gamma$-lower and $\Gamma$-upper limits --- suprema, over the neighbourhoods of each point, of the limits inferior and superior of local infima; we refer to Dal Maso \cite{dalMaso} for this definition. On first-countable spaces, the general definition is equivalent to the following sequential characterisation \citep{dalMaso}, which we adopt as our definition, since every space appearing in this paper is metrisable or pseudometrisable, and hence first countable.

\begin{definition}\label{def:gammaconvergence}
Let $X$ be a first-countable topological space (for example, a metric space). A sequence of functionals $F_n : X \to \R \cup \{+\infty\}$ \emph{$\Gamma$-converges} to $F : X \to \R \cup \{+\infty\}$, written $F_n \xrightarrow[]{\Gamma} F$, if for every $x \in X$:
\begin{enumerate}
    \item[\textup{(i)}] \textbf{(Limit-inferior inequality)} for every sequence $(x_n)_{n \geq 1} \subset X$ 
    with $x_n \to x$,
    \begin{equation}
        F(x) \leq \liminf_{n \to \infty} F_n(x_n);
    \end{equation}
    \item[\textup{(ii)}] \textbf{(Recovery sequence)} there exists a sequence 
    $(x_n)_{n \geq 1} \subset X$ with $x_n \to x$ such that
    \begin{equation}
        \limsup_{n \to \infty} F_n(x_n) \leq F(x).
    \end{equation}
\end{enumerate}
Given a sequence $\{F_q\}_{q \in \Npos}$, we say that $F_q$ $\Gamma$-converges to $F$ as $q \to \infty$ if $F_{q_j} \xrightarrow[]{\Gamma} F$ for every subsequence $q_j \to \infty$.
\end{definition}

$\Gamma$-convergence possesses the remarkable property that taking $\Gamma$-limits commutes with minimisation. Indeed, Braides ~\cite{Braides2006} points out that the definition of $\Gamma$-convergence was \textit{maieutically} reverse-engineered so as to have this property. 

\begin{theorem}[Fundamental theorem of $\Gamma$-convergence; \cite{DeGiorgi, dalMaso}]\label{thm:fundamentalgamma}
    Let $F_n$ be a sequence of functionals on a first-countable topological space $X$ with $F_n \xrightarrow[]{\Gamma} F$, and for each $n$ let $x_n$ be a minimiser of $F_n$. Then any accumulation point of the sequence $\{x_n\}$ is a minimiser of $F$.
\end{theorem}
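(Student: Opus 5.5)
The plan is to prove Theorem \ref{thm:fundamentalgamma} directly from the sequential characterisation of Definition \ref{def:gammaconvergence}. One care point: an accumulation point is reached along a subsequence, so I first record that $\Gamma$-convergence passes to subsequences. Let $x$ be an accumulation point of $\{x_n\}$; since $X$ is first countable, there is a subsequence $x_{n_k} \to x$.

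\textbf{Step 1: passing to the subsequence.} If $F_n \xrightarrow{\Gamma} F$, then $F_{n_k} \xrightarrow{\Gamma} F$.
\begin{itemize}
\item For the limit-inferior inequality along $n_k$, take any sequence $y_k \to y$. Extend it to a full sequence by setting $z_{n_k} = y_k$ and $z_n = y$ for $n \notin \{n_k\}$. Then $z_n \to y$, and
\[
F(y) \le \liminf_n F_n(z_n) \le \liminf_k F_{n_k}(y_k).
\]
\item A recovery sequence for the full sequence restricts to one for the subsequence, since a $\limsup$ along a subsequence is bounded by the full $\limsup$.
\end{itemize}

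\textbf{Step 2: two inequalities.} Fix an arbitrary $y \in X$ and take a recovery sequence $(y_k)$ for $y$ along $n_k$, so $y_k \to y$ and $\limsup_k F_{n_k}(y_k) \le F(y)$. Since $x_{n_k}$ minimises $F_{n_k}$, we have $F_{n_k}(x_{n_k}) \le F_{n_k}(y_k)$ for every $k$. Applying the limit-inferior inequality to $x_{n_k} \to x$ then gives
\[
F(x) \le \liminf_k F_{n_k}(x_{n_k}) \le \limsup_k F_{n_k}(y_k) \le F(y).
\]

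\textbf{Conclusion.} Since $y$ was arbitrary, $x$ minimises $F$. Taking $y = x$ also gives $F(x) = \lim_k F_{n_k}(x_{n_k})$, which is the convergence of the minimum values.

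There is no real obstacle here. The only delicate points are bookkeeping:
\begin{itemize}
\item the subsequence issue handled in Step 1;
\item working in $\R \cup \{+\infty\}$, where all the inequalities above remain valid.
\end{itemize}
If $F \equiv +\infty$, every point is trivially a minimiser. The argument never uses that $F$ attains its infimum beforehand; the fact that $F$ has a minimiser comes out of the argument itself.
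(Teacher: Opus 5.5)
Your proof is correct and is the standard argument that the paper cites from Dal Maso rather than proving. You apply the limit-inferior inequality at the accumulation point, take a recovery sequence for an arbitrary competitor $y$, and use minimality of $x_{n_k}$ to sandwich the two; the paper reproduces exactly this structure in specialised form in \eqref{eq:directsandwich} of Subsection~\ref{sec:lscgamma}, with the constant sequence as recovery sequence, and your Step~1 on stability under subsequences (which the paper builds into its definition for $q$-indexed families) is handled correctly by the padded sequence $z_n$.
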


\subsection{Sobolev spaces of curves}\label{sec:sobolev}

So far, we have considered the energy functional $E_q$ as a functional on the space of continuously differentiable curves \\ $C^1([0,1],M)$. In fact, it is more natural to consider $E_q$ as a functional on the Sobolev space of curves in $M$ with $L^2$-integrable weak derivatives, $H^1([0,1],M)$. 

\begin{definition}[\cite{Lewis2020}]\label{def:sobolevcurves}
    Let $M$ be a manifold and let $s \geq 1$ be an integer (only $s = 1$ plays a role in this paper). We denote by $H^s([0,1],M)$ the set of maps $\gamma : [0,1] \to M$ such that for all smooth functions $f:M \to \R$, $f \circ \gamma \in H^s([0,1],\R)$.
\begin{itemize}
    \item Denote by $H^s([0,1],M, x,y)$ the subset of $H^s([0,1],M)$ such that $\gamma(0) = x$ and $\gamma(1) = y$.
    \item Denote by $H^s([0,1],M, x,y,D)$ the subset of $H^s([0,1],M,x,y)$ such that \\$\dot{\gamma}(t) \in D_{\gamma(t)}$ for almost all $t$.
\end{itemize}
\end{definition}

Of course, unlike $H^s([0,1],\R)$, $H^s([0,1],M)$ and its variants are not vector spaces; nevertheless, each $f$ gives rise to a semimetric, and a family of semimetrics allows us to define a topology in a natural way, by analogy with the topology induced by a family of seminorms on a vector space. We follow the approach of Lewis \cite{Lewis2020} in introducing a family of semimetrics indexed by $f$ in order to equip $H^s([0,1],M)$ with an appropriate topology. By analogy with the more familiar concept of a seminorm, a semimetric denotes a distance function satisfying the triangle inequality and symmetry conditions, but not necessarily the condition that $d(x,y) = 0 \implies x=y$. 

\begin{definition}\label{def:semimetrictopology}
    For $f \in C^\infty(M, \R)$, $a \in \{0, 1, \dots,s\}$ and $\gamma_1, \gamma_2 \in H^s([0,1],M)$, we define the semimetric
    \begin{equation}
        \rho^s_{a,f}(\gamma_1, \gamma_2) = \left(\int_0^1|(f \circ \gamma_1)^{(a)}(t)  - (f \circ \gamma_2)^{(a)}(t)|^2 dt \right)^\frac{1}{2}.
    \end{equation}
    The topology on $H^s([0,1],M)$ is the coarsest topology such that $\rho^s_{a,f}$ is continuous for all $f \in C^\infty (M, \R)$ and for all $a \in \{0, 1, \dots,s\}$. 
\end{definition}
    
Another way to understand the topology on $H^s([0,1],M)$ is as the initial topology of the family of maps 
\begin{align}
\begin{split}
    \mathrm{ev}_f:H^s([0,1],M) &\to  H^s([0,1],\R) \\
    \gamma &\mapsto f \circ \gamma .
\end{split}
\end{align}
We have the following proposition.

\begin{proposition}[\cite{Lewis2020}, Section 3.2]\label{prop:topchar}
\begin{enumerate}
    \item The Whitney embedding theorem provides us with an embedding $\varphi : M \hookrightarrow \R^N$. The topology of $H^s([0,1],M)$ is induced by the finite family of semimetrics $\rho^s_{a, f^l}$, where $f^l = \pi_l \circ \varphi$ and $\pi_l$ denotes the $l^{\mathrm{th}}$ coordinate projection $\R^N \to \R$. 
    \item In particular, the topology is induced by the single semimetric $\max_{a,l}\rho^s_{a,f^l}$, so $H^s([0,1],M)$ is pseudometrisable and hence first countable. Consequently, Definition \ref{def:gammaconvergence} applies on $H^s([0,1],M)$, and a subset $A \subseteq H^s([0,1],M)$ is closed if and only if it contains the limits of its convergent sequences.
\end{enumerate}
\end{proposition}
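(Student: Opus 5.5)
The proof rests on one reduction: the $H^s$ topology, defined by uncountably many semimetrics $\rho^s_{a,f}$, is already generated by the finitely many coordinate functions of a Whitney embedding. Fix the Whitney embedding $\varphi: M \hookrightarrow \R^N$, which we may take to be closed, and set $f^l = \pi_l\circ\varphi$. Let $\tau$ denote the topology of Definition \ref{def:semimetrictopology}, and let $\tau'$ be the coarsest topology making the finitely many $\rho^s_{a,f^l}$ continuous. Trivially $\tau' \subseteq \tau$. The substance of part 1 is the reverse inclusion: for every $f \in C^\infty(M,\R)$ and every $a$, the semimetric $\rho^s_{a,f}$ must be $\tau'$-continuous.

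To prove this, extend $f\circ\varphi^{-1}$, which is defined on the closed submanifold $\varphi(M)$, to a smooth function $F:\R^N\to\R$ using a tubular neighbourhood and a partition of unity. Then $f\circ\gamma = F(\Phi_\gamma)$, where $\Phi_\gamma = \varphi\circ\gamma \in H^s([0,1],\R^N)$ is the vector of components $f^l\circ\gamma$. Convergence in $\tau'$ means exactly that $\Phi_{\gamma_n} \to \Phi_\gamma$ in $H^s([0,1],\R^N)$. For $s=1$, this convergence together with the Sobolev embedding $H^1\hookrightarrow C^0$ puts all curves in a fixed compact set $K\subset\R^N$. There $F$ and $\nabla F$ are uniformly continuous and bounded, and we have
\[
(F\circ\Phi)' = \nabla F(\Phi)\cdot\Phi'.
\]
Splitting $\nabla F(\Phi_n)\cdot\Phi_n' - \nabla F(\Phi)\cdot\Phi'$ into $\nabla F(\Phi_n)\cdot(\Phi_n'-\Phi') + (\nabla F(\Phi_n)-\nabla F(\Phi))\cdot\Phi'$ gives $L^2$ convergence of the derivatives. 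So $\gamma\mapsto f\circ\gamma$ is sequentially continuous into $H^s([0,1],\R)$. For general $s$ the same argument applies via the Faà di Bruno formula and the fact that $H^s$ is an algebra for $s\ge1$.

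This sequential statement is enough because $\tau'$ is pseudometrised by $\rho := \max_{a,l}\rho^s_{a,f^l}$, so sequential continuity implies continuity. Continuity of $\gamma\mapsto f\circ\gamma$ then makes each $\rho^s_{a,f}$ continuous, by the triangle inequality on $H^s([0,1],\R)$. Hence $\tau\subseteq\tau'$, and part 1 follows.

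Part 2 is then formal. A finite maximum of semimetrics is a semimetric, and its balls generate the same topology as the finite family, since a finite intersection of balls contains a ball of the max. Therefore $H^s([0,1],M)$ is pseudometrisable, which gives first countability with countable bases of balls of radius $1/k$. In a first-countable space, closedness is equivalent to sequential closedness, so Definition \ref{def:gammaconvergence} applies.

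The main obstacle is the continuity step in part 1. One might try to conclude directly from the fact that $\tau'$ is coarser, but that inclusion only runs one way. The delicate ingredients are the smooth extension $F$ and the uniform $C^0$ control that keeps the curves in a compact set. Sobolev embedding supplies that control for $s=1$; for $s \ge 2$ the higher-derivative terms in the chain rule have to be bounded by product estimates, which is routine once the compact set $K$ is available.
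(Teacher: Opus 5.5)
Your proof is correct. Note that the paper itself gives no proof of this proposition: it is quoted from \cite{Lewis2020}, Section 3.2. What you have written is therefore a self-contained substitute rather than a reconstruction.

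The key step is sound. You prove $\tau\subseteq\tau'$ by writing $f\circ\gamma = F\circ(\varphi\circ\gamma)$ for a smooth extension $F$ of $f\circ\varphi^{-1}$, then apply the chain rule with the splitting $\nabla F(\Phi_n)\cdot(\Phi_n'-\Phi') + (\nabla F(\Phi_n)-\nabla F(\Phi))\cdot\Phi'$ on a compact set supplied by $H^1\hookrightarrow C^0$. Passing from sequential continuity to continuity is legitimate, because the domain $(H^s,\tau')$ is already pseudometrised by $\max_{a,l}\rho^s_{a,f^l}$.

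Your route differs from the chart viewpoint the paper adopts in Remark \ref{rem:whitneychart}(2) in one respect: the global extension $F$. That extension requires $\varphi(M)$ to be closed in $\R^N$, which you correctly secure by choosing a proper Whitney embedding. In the local alternative, the implicit function theorem makes $n$ of the $f^l$ a chart near each point, so any smooth $f$ is locally a smooth function of them. That works for an arbitrary embedding, since embedded submanifolds are locally closed. Its cost is a partition of $[0,1]$ and a check that nearby curves stay inside the chart domains, as in Remark \ref{rem:whitneychart}(3). Your version avoids that bookkeeping.

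Two small points:
\begin{enumerate}
\item What makes Definition \ref{def:gammaconvergence} applicable is first countability, not the sequential characterisation of closed sets. Your ``so'' links the wrong pair of facts.
\item The case $s\geq 2$ is only sketched via Fa\`a di Bruno. That is harmless, since the paper only ever uses $s=1$, but it is not a complete argument as written.
\end{enumerate}
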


Let us collect some straightforward observations about the Sobolev space of curves which we will make use of later. 

\begin{remark}\label{rem:whitneychart}
Define $H^s([a,b],M)$ in the obvious way. Then:
\begin{enumerate}
    \item It holds that $\gamma_j \to \gamma$ in $H^s([0,1],M)$ if and only if there exists a finite cover of $[0,1]$ by closed intervals $\{[a_k,b_k]\}_{k=1}^K$ such that $\gamma_j \to \gamma$ in $H^s([a_k,b_k],M)$ for all $k=1, \dots, K$.
    \item As before, let $\{f^l\}_{l=1}^N$ denote the coordinate projections of the Whitney embedding $\varphi: M \hookrightarrow \R^N$, i.e. $f^l = \pi_l \circ \varphi$. By the implicit function theorem, around any $x \in M$ there exists a subset of these maps $f^{l_1}, \dots, f^{l_n}$ such that $(f^{l_1}, \dots, f^{l_n})$ is a coordinate chart in a neighbourhood $U$ of $x$. If the images of $\gamma_j$ and $\gamma$ are contained in $U$, then it is sufficient to check convergence with respect to the semimetrics $\rho^s_{a,f^{l_m}}$ for $l_m \in \{l_1, \dots, l_n\}$.
    \item If $\gamma_j \to \gamma$ uniformly with respect to the distance function on $M$ induced by $g$, then there exist a partition $0 = a_0 < a_1 < \dots < a_K = 1$, charts $(U_k, \varphi_k)$ of the form constructed in (2), which may be chosen with $\overline{U}_k$ compact, and an index $j_0$, such that, writing $I_k := [a_{k-1},a_k]$, we have $\gamma(I_k) \subseteq U_k$ and $\gamma_j(I_k) \subseteq U_k$ for all $j \geq j_0$ and all $k$. To check convergence $\gamma_j \to \gamma$ in $H^1([0,1],M)$, it is sufficient to check convergence of $\{\gamma_j|_{I_k}\}_{j \geq j_0}$ in $H^1(I_k,M)$ for each $k$, which in turn can be verified using the semimetrics $\rho^s_{a,f_k^{l_m}}$ coming from the coordinate chart constructed from the Whitney embedding on $U_k$ via the implicit function theorem. 
\end{enumerate}
The same statements hold, mutatis mutandis, with $[0,1]$ replaced by any closed subinterval. 
\end{remark}

\subsection{$\Gamma$-convergence of the energy functionals}\label{sec:gammaenergy}

Unfortunately, it is difficult to apply the fundamental theorem of $\Gamma$-convergence on $H^1([0,1],M)$. Instead, we will consider $E_q$ and $E_\infty$ as functionals on the space of continuous curves from $x$ to $y$,
\begin{equation}
    \mathcal{C}_{x,y} := \{\gamma \in C^0([0,1],M)\;|\;\gamma(0) = x,\, \gamma(1) = y\},
\end{equation}
equipped with the metric of uniform convergence
\begin{equation}
    d_{C^0}(\sigma_1,\sigma_2) := \sup_{t \in [0,1]} d_g(\sigma_1(t),\sigma_2(t)),
\end{equation}
by extending them by infinity away from $H^1([0,1],M,x,y)$. Explicitly, we define 
\begin{align}
      E_q(\gamma) =   
      \begin{cases} 
     \frac{1}{2}\int_0^1g_q(\dot{\gamma},\dot{\gamma})\,dt \; &\text{if}\; \gamma \in H^1([0,1],M,x,y) \\
      \infty  &\text{otherwise},\\
    \end{cases}
\end{align}
and
\begin{align}
      E_\infty(\gamma) =   
      \begin{cases} 
     \frac{1}{2}\int_0^1g(\dot{\gamma},\dot{\gamma})\,dt \; &\text{if}\; \gamma \in H^1([0,1],M,x,y,D) \\
      \infty  &\text{otherwise}. \\
    \end{cases}
\end{align}
More generally, for a closed subinterval $I \subseteq [0,1]$ we write 
\begin{equation}
    E_q(\sigma; I) := \frac{1}{2}\int_I g_q(\dot{\sigma},\dot{\sigma})\,dt, \qquad \sigma \in H^1(I,M),
\end{equation}
extended to $C^0(I,M)$ by $+\infty$ away from $H^1(I,M)$, so that $E_q(\sigma) = E_q(\sigma;[0,1])$ for $\sigma \in \mathcal{C}_{x,y} \cap H^1([0,1],M)$. We retain the symbols $E_q$ and $E_\infty$ for the extended functionals; no confusion should arise. 

The remainder of this section is organised as follows. In the present subsection we establish lower semicontinuity of $E_q$ with respect to uniform convergence (Lemma \ref{lemma:lsc}) and monotone pointwise convergence of $E_q$ to $E_\infty$ (Lemma \ref{lemma:monotone}), and combine the two to prove $\Gamma$-convergence (Theorem \ref{thm:gammaC0}). Subsection \ref{sec:geodesicconv} contains the main result of the section, Theorem \ref{thm:penalisedconvergence}: minimisers of the penalised problems subconverge, strongly in $H^1([0,1],M)$, to a minimiser of the constrained problem. Subsection \ref{sec:subsequenceselection} records guarantees which hold for the full sequence of penalised geodesics, without passing to a subsequence. Subsection \ref{sec:lscgamma} examines the role played by $\Gamma$-convergence in the argument. 

\begin{lemma}\label{lemma:weakconv}
    Let $\{f_j\}_{j=1}^\infty \subset H^1([0,1], \R)$ satisfy $f_j \to f$ uniformly, and suppose $\dot{f}_j \rightharpoonup \eta$ weakly in $L^2([0,1],\R)$. Then $f \in H^1([0,1],\R)$ with weak derivative $\dot{f} = \eta$. 
\end{lemma}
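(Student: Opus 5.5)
The idea is to pass to the limit in the integration-by-parts identity that defines the weak derivative. Since $f_j \in H^1([0,1],\R)$, for every test function $\phi \in C_c^\infty((0,1))$ we have
\begin{equation}
    \int_0^1 f_j(t)\,\dot{\phi}(t)\,dt = -\int_0^1 \dot{f}_j(t)\,\phi(t)\,dt .
\end{equation}
We let $j \to \infty$ on each side separately and then read off the conclusion from the limiting identity.

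For the left-hand side, uniform convergence $f_j \to f$ on the compact interval $[0,1]$ gives
\begin{equation}
    \Bigl|\int_0^1 (f_j - f)\dot{\phi}\,dt\Bigr| \leq \sup_{[0,1]}|f_j - f|\,\norm{\dot{\phi}}_{L^1} \to 0 .
\end{equation}
For the right-hand side, $\phi \in L^2([0,1],\R)$, so $v \mapsto \int_0^1 v\,\phi\,dt$ is a continuous linear functional on $L^2$. The weak convergence $\dot{f}_j \rightharpoonup \eta$ therefore yields $\int_0^1 \dot{f}_j\phi\,dt \to \int_0^1 \eta\,\phi\,dt$.

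Combining the two limits gives $\int_0^1 f\dot{\phi}\,dt = -\int_0^1 \eta\phi\,dt$ for all $\phi \in C_c^\infty((0,1))$. By definition, $\eta$ is then the weak derivative of $f$. It remains to check the integrability requirements for membership in $H^1$. First, $f$ is a uniform limit of continuous functions (each $H^1$ function on an interval has a continuous representative), so $f$ is continuous and hence in $L^2$. Second, $\eta \in L^2$ because it is a weak limit in $L^2$. Hence $f \in H^1([0,1],\R)$ with $\dot{f} = \eta$.

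The argument is routine; the only point needing a little care is that the hypothesis must be used against the test function $\phi$ itself, which lies in $L^2$ because $[0,1]$ is bounded. Uniqueness of weak derivatives up to null sets then makes the identification $\dot f = \eta$ unambiguous.
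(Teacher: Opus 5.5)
Your proposal is correct and follows essentially the same route as the paper: pass to the limit in the integration-by-parts identity against a test function, using uniform convergence on the $f_j$ side and weak $L^2$ convergence on the $\dot f_j$ side. Your additional remarks that $f$ is continuous (hence in $L^2$) and that $\eta \in L^2$ spell out the membership $f \in H^1([0,1],\R)$, which the paper leaves implicit.
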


\begin{proof}
    For any test function $\psi \in C_c^\infty((0,1))$, we have that 
    \begin{align}
\begin{split}
        \int_0^1\psi' f \,dt = \underset{j \to \infty}\lim\int_0^1\psi' f_j\, dt &=  -\underset{j \to \infty}\lim\int_0^1\psi \dot{f}_j \, dt \\ &= -\int_0^1\psi \eta\,dt,
    \end{split}
\end{align}
    using uniform convergence in the first equality and weak convergence in the last. Hence $\eta$ is the weak derivative of $f$.
\end{proof}

We can now establish lower semicontinuity of $E_q$ with respect to uniform convergence. We state the lemma over an arbitrary closed subinterval, as this is the form in which it will be reused in the proof of Proposition \ref{thm:limitweakdiff}.

\begin{lemma}[Lower semicontinuity of $E_q$ under uniform convergence]\label{lemma:lsc}
Fix $q \in \Npos$ and a closed subinterval $I \subseteq [0,1]$. If $\sigma_j \to \sigma$ uniformly on $I$, then
\begin{equation}
    E_q(\sigma; I) \;\leq\; \underset{j \to \infty}{\lim\inf}\, E_q(\sigma_j; I).
\end{equation}
\end{lemma}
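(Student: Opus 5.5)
We begin with the trivial case. If $\liminf_j E_q(\sigma_j;I) = \infty$ there is nothing to prove. Otherwise we pass to a subsequence, not relabelled, along which $E_q(\sigma_j;I)$ converges to the $\liminf$ and is uniformly bounded by some $C<\infty$. Since $g \leq g_q$, this bounds $\int_I g(\dot\sigma_j,\dot\sigma_j)\,dt \le 2C$. The plan is then to localise, using Remark \ref{rem:whitneychart}(3) applied on $I$. Uniform convergence gives a partition $I_1,\dots,I_K$ of $I$ and charts $(U_k,\varphi_k)$ with $\overline U_k$ compact, such that $\sigma(I_k)$ and $\sigma_j(I_k)$ lie in $U_k$ for all large $j$. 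Because the integrand is nonnegative and $\liminf$ is superadditive, we have $\sum_k \liminf_j E_q(\sigma_j;I_k) \le \liminf_j E_q(\sigma_j;I)$. It therefore suffices to prove the inequality on each $I_k$, i.e.\ inside a single chart with compact closure.

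Working in the chart, write $x_j = \varphi_k\circ\sigma_j : I_k \to \R^n$ and $x = \varphi_k\circ\sigma$. On $\overline U_k$ the coefficient matrices $G(x)$ of $g$ and $G_q(x)$ of $g_q$ are continuous and uniformly positive definite, so they are comparable to the Euclidean metric. Continuity of $G_q$ uses that $D$, and hence $P$, is a continuous subbundle. The Riemannian bound therefore gives a uniform bound on $\|\dot x_j\|_{L^2(I_k)}$. By weak compactness of bounded sets in $L^2$, a further subsequence satisfies $\dot x_j \rightharpoonup \eta$ weakly in $L^2(I_k,\R^n)$. We also have $x_j \to x$ uniformly, since $\varphi_k$ is uniformly continuous on the compact set $\overline U_k$. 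Lemma \ref{lemma:weakconv}, applied componentwise, then shows $x \in H^1(I_k,\R^n)$ with $\dot x = \eta$. Hence $\sigma|_{I_k} \in H^1(I_k,M)$, and $E_q(\sigma;I_k)$ is given by the integral formula rather than $+\infty$.

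The remaining step, which we expect to be the main obstacle, is weak lower semicontinuity with a position-dependent integrand. We split
\begin{equation*}
\int_{I_k} \dot x_j^{T} G_q(x_j)\dot x_j\,dt = \int_{I_k} \dot x_j^{T} G_q(x)\dot x_j\,dt + \int_{I_k} \dot x_j^{T}\bigl(G_q(x_j)-G_q(x)\bigr)\dot x_j\,dt.
\end{equation*}
The second term is bounded by $\sup_{t}\|G_q(x_j(t))-G_q(x(t))\|\,\|\dot x_j\|_{L^2}^2$. This tends to zero because $G_q$ is uniformly continuous on $\overline U_k$, $x_j \to x$ uniformly, and the $L^2$ norms of $\dot x_j$ are bounded.

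The first term is $\|G_q(x)^{1/2}\dot x_j\|_{L^2}^2$. Multiplication by the bounded continuous matrix field $G_q(x(t))^{1/2}$ is a bounded linear operator on $L^2$, so it preserves weak convergence, and $G_q(x)^{1/2}\dot x_j \rightharpoonup G_q(x)^{1/2}\dot x$. Weak lower semicontinuity of the $L^2$ norm then gives $\int \dot x^{T}G_q(x)\dot x \le \liminf_j \int \dot x_j^{T}G_q(x)\dot x_j$. Combining the two terms yields $E_q(\sigma;I_k) \le \liminf_j E_q(\sigma_j;I_k)$ along the subsequence, and so also for the original $\liminf$, since we first extracted a subsequence realising it. Summing over $k$ completes the argument.
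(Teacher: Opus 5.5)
Your proposal is correct and follows essentially the same route as the paper: localisation via Remark~\ref{rem:whitneychart}(3), an $L^2$ bound from uniform ellipticity, weak compactness together with Lemma~\ref{lemma:weakconv}, freezing the coefficients along the limit curve at an error controlled by uniform continuity, and weak lower semicontinuity of the frozen quadratic form. Your multiplication by $G_q(x)^{1/2}$ is equivalent to the paper's weighted inner product $\langle\cdot,\cdot\rangle_{L^2_{g_q}}$. The one thing to tighten is the subsequence bookkeeping. As written, a per-$k$ further extraction does not give the per-interval inequality for the earlier sequence, since the first extraction realised the $\liminf$ of the total energy, not of each $E_q(\sigma_j;I_k)$. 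You can fix this in either of two ways. One is to choose the further subsequences simultaneously (nested) for the finitely many $k$ and sum along that common subsequence, on which $E_q(\sigma_j;I)$ still tends to the original $\liminf$. The other is to note that Lemma~\ref{lemma:weakconv} forces every weak subsequential limit to equal $\dot x$, so the whole extracted sequence already converges weakly and no further extraction is needed.
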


\begin{proof}
Set $L := \liminf_{j} E_q(\sigma_j; I)$. If $L = \infty$ there is nothing to prove, so assume $L < \infty$ and pass to a subsequence (not relabelled) with $\lim_{j} E_q(\sigma_j; I) = L$ and $E_q(\sigma_j; I) \leq L+1$ for all $j$; in particular $\sigma_j \in H^1(I,M)$.

Since $\sigma_j \to \sigma$ uniformly, Remark~\ref{rem:whitneychart}(3) provides a partition $\min I = a_0 < a_1 < \dots < a_K = \max I$ of $I$, charts $(U_k, \varphi_k)$ with $\overline{U}_k$ compact, and an index $j_0$ such that $\sigma(I_k) \subseteq U_k$ and $\sigma_j(I_k) \subseteq U_k$ for all $j \geq j_0$, where $I_k := [a_{k-1},a_k]$. Fix $k$ and write $\xi_j := \varphi_k \circ \sigma_j|_{I_k}$ and $\xi := \varphi_k \circ \sigma|_{I_k}$, so that $\xi_j \to \xi$ uniformly on $I_k$. Let $(g_q)_{ab}$ denote the coefficients of $g_q$ in this chart. By compactness of $\overline{U}_k$, the metric is uniformly elliptic there: there exist $0 < \lambda \leq \Lambda$ with
\begin{equation}
    \lambda\,|v|^2 \;\leq\; (g_q)_{ab}(p)\,v^a v^b \;\leq\; \Lambda\,|v|^2 
\end{equation}
for all $p \in \overline{U}_k$ and $v \in \R^n$.

The energy bound and ellipticity give $\norm{\dot{\xi}_j}_{L^2(I_k)}^2 \leq \tfrac{2}{\lambda}(L+1)$. Since bounded sequences in the Hilbert space $L^2(I_k,\R^n)$ contain weakly convergent subsequences, we may extract a further subsequence (again not relabelled, and chosen simultaneously for the finitely many $k$) with $\dot{\xi}_j \rightharpoonup v$ weakly in $L^2(I_k,\R^n)$. Applying Lemma~\ref{lemma:weakconv} componentwise (using $\xi_j \to \xi$ uniformly), we obtain $\xi \in H^1(I_k,\R^n)$ with $\dot{\xi} = v$; in particular $\sigma \in H^1(I,M)$, so that the extension of $E_q(\cdot\,;I)$ by $+\infty$ is consistent: a uniform limit of curves with equibounded energy cannot leave $H^1(I,M)$.

Next, we define an inner product on $L^2(I_k,\R^n)$ whose coefficients are pulled back along the limit curve,
\begin{equation}
    \langle u, w \rangle_{L^2_{g_q}} := \int_{I_k} (g_q)_{ab}(\xi(t))\, u^a(t)\, w^b(t)\,dt,
\end{equation}
and denote by $L^2_{g_q}(I_k,\R^n)$ the resulting weighted space; its norm $\norm{\cdot}_{L^2_{g_q}}$ is equivalent to the standard $L^2$ norm by ellipticity. Then
\begin{align}
\begin{split}
    \int_{I_k} (g_q)_{ab}(\xi_j)\,\dot{\xi}_j^a \dot{\xi}_j^b \,dt \;=\; \norm{\dot{\xi}_j}_{L^2_{g_q}}^2 \\ +\; \int_{I_k} \bigl[(g_q)_{ab}(\xi_j) - (g_q)_{ab}(\xi)\bigr] \dot{\xi}_j^a \dot{\xi}_j^b \,dt,
\end{split}
\end{align}
and the second term is bounded in absolute value by 
\begin{equation}
    \sup_{t \in I_k}\bigl\Vert(g_q)_{ab}(\xi_j(t)) - (g_q)_{ab}(\xi(t))\bigr\Vert \cdot \tfrac{2}{\lambda}(L+1) \;\longrightarrow\; 0
\end{equation}
(in any matrix norm), since $(g_q)_{ab}$ is uniformly continuous on $\overline{U}_k$ and $\xi_j \to \xi$ uniformly. As norms are weakly lower semicontinuous,
\begin{align}
\begin{split}
    \norm{\dot{\xi}}_{L^2_{g_q}}^2 \;&\leq\; \underset{j\to\infty}{\lim\inf}\,\norm{\dot{\xi}_j}_{L^2_{g_q}}^2 \\ &=\; \underset{j\to\infty}{\lim\inf} \int_{I_k} (g_q)_{ab}(\xi_j)\,\dot{\xi}_j^a \dot{\xi}_j^b \,dt.
\end{split}
\end{align}

Summing over $k$ and using superadditivity of $\liminf$ (and suppressing the dependence of $\xi$ on $k$),
\begin{align}
\begin{split}
    E_q(\sigma; I) \;&=\; \frac{1}{2}\sum_{k=1}^{K} \norm{\dot{\xi}}_{L^2_{g_q}}^2 \\ &\leq\; \frac{1}{2}\sum_{k=1}^{K} \underset{j\to\infty}{\lim\inf} \int_{I_k} (g_q)_{ab}(\xi_j)\,\dot{\xi}_j^a \dot{\xi}_j^b \,dt \\ &\leq\; \underset{j\to\infty}{\lim\inf}\, E_q(\sigma_j; I) \;=\; L,
\end{split}
\end{align}
where the final bound is against the limit inferior of the original sequence, which justifies the initial extraction of a subsequence.
\end{proof}

\begin{lemma}[Monotone pointwise convergence]\label{lemma:monotone}
For every $\sigma \in \mathcal{C}_{x,y}$, the map $q \mapsto E_q(\sigma)$ is nondecreasing in $q \in \Npos$, and
\begin{equation}
    \sup_{q \in \Npos} E_q(\sigma) \;=\; \lim_{q \to \infty} E_q(\sigma) \;=\; E_\infty(\sigma).
\end{equation}
\end{lemma}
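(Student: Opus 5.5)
The proof splits into monotonicity and identification of the limit, and uses only the pointwise decomposition
\[
g_q(v,v) = g(Pv,Pv) + q\,g(P^\perp v,P^\perp v).
\]

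\emph{Monotonicity.} For $\sigma \notin H^1([0,1],M,x,y)$ every $E_q(\sigma)$ and $E_\infty(\sigma)$ equals $+\infty$, so there is nothing to prove. For $\sigma \in H^1$, the displayed formula shows that $q \mapsto g_q(\dot\sigma(t),\dot\sigma(t))$ is nondecreasing for almost every $t$, because $g(P^\perp v,P^\perp v)\ge 0$. Integrating gives that $q\mapsto E_q(\sigma)$ is nondecreasing. Hence $\lim_q E_q(\sigma)$ exists in $[0,\infty]$ and equals $\sup_q E_q(\sigma)$.

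\emph{Identification of the limit.} Write
\[
E_q(\sigma) = A(\sigma) + q\,B(\sigma), \qquad A(\sigma)=\tfrac12\int_0^1 g(P\dot\sigma,P\dot\sigma)\,dt, \qquad B(\sigma)=\tfrac12\int_0^1 g(P^\perp\dot\sigma,P^\perp\dot\sigma)\,dt.
\]
Both $A$ and $B$ are finite, since $g(Pv,Pv)+g(P^\perp v,P^\perp v)=g(v,v)$ and $\sigma\in H^1$. There are two cases.

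\begin{enumerate}
\item If $\dot\sigma(t)\in D_{\sigma(t)}$ for almost every $t$, then $P^\perp\dot\sigma=0$ almost everywhere. So $B(\sigma)=0$ and $E_q(\sigma)=\tfrac12\int_0^1 g(\dot\sigma,\dot\sigma)\,dt=E_\infty(\sigma)$ for every $q$.
\item Otherwise the set $\{t : P^\perp\dot\sigma(t)\neq 0\}$ has positive measure, so $B(\sigma)>0$. Then $E_q(\sigma)\ge qB(\sigma)\to\infty=E_\infty(\sigma)$.
\end{enumerate}

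The only points that need care are measure-theoretic. First, $\dot\sigma$ must be interpreted as a measurable $L^2$ section along $\sigma$, for instance through the charts of Remark \ref{rem:whitneychart}(3). Second, $t\mapsto g(P^\perp\dot\sigma,P^\perp\dot\sigma)$ must be measurable. This holds because $P^\perp$ is a continuous bundle map and $\sigma$ is continuous. Once that is settled, a nonnegative measurable function that is not almost everywhere zero has positive integral, which gives case (2).

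Unlike the $C^1$ observation at the start of the section, the argument never needs an open set on which $\dot\sigma\notin D$; positive measure is enough. I expect no substantial obstacle.
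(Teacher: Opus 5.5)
Your proof is correct and follows the paper's three-case argument: non-Sobolev, horizontal, and non-horizontal with $P^\perp\dot\sigma \neq 0$ on a set of positive measure. The only difference is cosmetic: you use the affine dependence $E_q(\sigma) = A(\sigma) + q\,B(\sigma)$ with $B(\sigma) > 0$ to get divergence directly, where the paper invokes the monotone convergence theorem, and both work.
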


\begin{proof}
Monotonicity is immediate from $g_q \leq g_{q'}$ for $q \leq q'$ (and trivial off $H^1$). For the supremum, consider three cases. If $\sigma \notin H^1([0,1],M,x,y)$, then $E_q(\sigma) = \infty = E_\infty(\sigma)$ for all $q$. If $\sigma \in H^1$ is horizontal, then $P^\perp\dot{\sigma} = 0$ almost everywhere, so $E_q(\sigma) = \frac{1}{2}\int_0^1 g(P\dot{\sigma},P\dot{\sigma})\,dt = E_\infty(\sigma)$ for all $q$. If $\sigma \in H^1$ is not horizontal, then the integrands $\frac{1}{2}\bigl(g(P\dot{\sigma},P\dot{\sigma}) + q\,g(P^\perp\dot{\sigma},P^\perp\dot{\sigma})\bigr)$ increase pointwise in $q$ and tend to $+\infty$ on the set $\{t : P^\perp\dot{\sigma}(t) \neq 0\}$, which has positive measure; by the monotone convergence theorem $E_q(\sigma) \nearrow \infty = E_\infty(\sigma)$.
\end{proof}

\begin{theorem}[$\Gamma$-convergence on $\mathcal{C}_{x,y}$]\label{thm:gammaC0}
With respect to uniform convergence on $\mathcal{C}_{x,y}$,
\begin{equation}
    E_q \;\overset{\Gamma}{\longrightarrow}\; E_\infty \qquad \text{as } q \to \infty.
\end{equation}
\end{theorem}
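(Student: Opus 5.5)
The proof combines Lemma \ref{lemma:lsc} with Lemma \ref{lemma:monotone}. This is the standard observation that an increasing sequence of lower semicontinuous functionals $\Gamma$-converges to its pointwise supremum. By Definition \ref{def:gammaconvergence} we must verify both conditions along an arbitrary subsequence $q_j \to \infty$. The space $\mathcal{C}_{x,y}$ with $d_{C^0}$ is a metric space, hence first countable, so the sequential definition applies.

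For the \emph{recovery sequence}, we take the constant sequence $\sigma_j := \sigma$. By Lemma \ref{lemma:monotone}, $E_{q_j}(\sigma) \leq E_\infty(\sigma)$ for every $j$, since $E_\infty(\sigma)$ is the supremum over $q$. Hence $\limsup_j E_{q_j}(\sigma_j) \leq E_\infty(\sigma)$, and this holds trivially when $E_\infty(\sigma) = \infty$.

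For the \emph{limit-inferior inequality}, let $\sigma_j \to \sigma$ uniformly in $\mathcal{C}_{x,y}$. Fix $m \in \Npos$. For all $j$ with $q_j \geq m$, monotonicity (Lemma \ref{lemma:monotone}) gives $E_{q_j}(\sigma_j) \geq E_m(\sigma_j)$. Therefore
\begin{equation}
    \liminf_{j\to\infty} E_{q_j}(\sigma_j) \;\geq\; \liminf_{j\to\infty} E_m(\sigma_j) \;\geq\; E_m(\sigma),
\end{equation}
where the second inequality is Lemma \ref{lemma:lsc} with $q = m$ and $I = [0,1]$. Here we use that on $\mathcal{C}_{x,y}$ the extension by $+\infty$ of $E_m$ agrees with $E_m(\cdot\,;[0,1])$, because the endpoint conditions are preserved under uniform limits. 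Taking the supremum over $m$ and applying Lemma \ref{lemma:monotone} once more yields $\liminf_j E_{q_j}(\sigma_j) \geq \sup_m E_m(\sigma) = E_\infty(\sigma)$.

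The substantive analytic content, lower semicontinuity at fixed $q$ under merely uniform convergence, has already been established in Lemma \ref{lemma:lsc}. The only step needing care is matching the extended functionals. We must check that a uniform limit in $\mathcal{C}_{x,y}$ lying outside $H^1$ receives the value $+\infty$ consistently on both sides. Lemma \ref{lemma:lsc} handles this, since a uniform limit of curves with bounded $E_m$ lies in $H^1$. Lemma \ref{lemma:monotone} handles the non-horizontal case, where $E_m(\sigma) \nearrow \infty$. No compactness or horizontal approximation is required, because the recovery sequence is constant.
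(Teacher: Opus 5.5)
Your proof is correct and follows essentially the same route as the paper: a constant recovery sequence justified by Lemma \ref{lemma:monotone}, and a limit-inferior inequality obtained by freezing the penalty at a fixed level $m$, applying Lemma \ref{lemma:lsc}, and then taking the supremum over $m$ via Lemma \ref{lemma:monotone}. Your remark that the extended $E_m$ on $\mathcal{C}_{x,y}$ coincides with $E_m(\cdot\,;[0,1])$ is a small bookkeeping point that the paper leaves implicit.
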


\begin{proof}
Let $q_j \to \infty$. Since $(\mathcal{C}_{x,y},d_{C^0})$ is a metric space, it is in particular first countable, and it suffices to verify the two conditions of Definition~\ref{def:gammaconvergence} for the sequence $\{E_{q_j}\}$.

\emph{Limit-inferior inequality.} Let $\sigma_j \to \sigma$ uniformly. Fix $r \geq 1$. Since $q_j \to \infty$, we have $q_j \geq r$ for all but finitely many $j$, and for such $j$, $E_{q_j}(\sigma_j) \geq E_r(\sigma_j)$ by monotonicity (Lemma~\ref{lemma:monotone}); since the limit inferior is unaffected by finitely many terms, lower semicontinuity (Lemma~\ref{lemma:lsc} applied with the fixed parameter $r$) gives
\begin{equation}
    \underset{j\to\infty}{\lim\inf}\, E_{q_j}(\sigma_j) \;\geq\; \underset{j\to\infty}{\lim\inf}\, E_r(\sigma_j) \;\geq\; E_r(\sigma).
\end{equation}
As this holds for every $r \geq 1$, taking the supremum over $r$ and applying Lemma~\ref{lemma:monotone} yields
\begin{equation}
    \underset{j\to\infty}{\lim\inf}\, E_{q_j}(\sigma_j) \;\geq\; \sup_{r \geq 1} E_r(\sigma) \;=\; E_\infty(\sigma).
\end{equation}

\emph{Recovery sequence.} Take the constant sequence $\sigma_j = \sigma$. Then $\sigma_j \to \sigma$ uniformly, and by Lemma~\ref{lemma:monotone},
\begin{equation}
    \underset{j\to\infty}{\lim\sup}\, E_{q_j}(\sigma) \;=\; \lim_{q\to\infty} E_q(\sigma) \;=\; E_\infty(\sigma),
\end{equation}
in all three cases, including the non-horizontal and non-Sobolev ones, where both sides equal $+\infty$.
\end{proof}

\subsection{Convergence of penalised geodesics}\label{sec:geodesicconv}

We are now ready to state and prove, via three propositions, the main theorem of this section. 

\begin{theorem}[Convergence of penalised geodesics]\label{thm:penalisedconvergence}
Let $(M,g)$ be a connected Riemannian manifold which is complete as a metric space with respect to $d_g$. Let $D \subset TM$ be a bracket-generating subbundle, and fix $x, y \in M$. For each $q \in \Npos$, let $\gamma_q$ be a minimiser of $E_q$ over $\mathcal{C}_{x,y}$. Then there exist a subsequence $q_j \to \infty$ and a minimiser $\gamma$ of $E_\infty$ over $\mathcal{C}_{x,y}$ --- in particular, a horizontal curve of constant speed --- such that
\begin{enumerate}
    \item[\textup{(i)}] $\gamma_{q_j} \to \gamma$ in $H^1([0,1],M)$, and in particular uniformly;
    \item[\textup{(ii)}] $E_{q_j}(\gamma_{q_j}) \to E_\infty(\gamma) = \min E_\infty$ and $l_{q_j}(\gamma_{q_j}) \to l_\infty(\gamma)$;
    \item[\textup{(iii)}] $q_j \int_0^1 g\bigl(P^\perp\dot{\gamma}_{q_j}, P^\perp\dot{\gamma}_{q_j}\bigr)\,dt \to 0$.
\end{enumerate}
Moreover, $\min_{\mathcal{C}_{x,y}} E_q \nearrow \min_{\mathcal{C}_{x,y}} E_\infty$ as $q \to \infty$.
\end{theorem}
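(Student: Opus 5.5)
The proof has five steps: uniform bounds, extraction of a uniformly convergent subsequence, identification of the limit as a minimiser via Theorem~\ref{thm:gammaC0} and Theorem~\ref{thm:fundamentalgamma}, convergence of energies, and the upgrade from uniform to strong $H^1$ convergence, which is the main difficulty.

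\emph{Uniform bounds.} By the Chow--Rashevskii theorem there is a horizontal $H^1$ curve $\bar\gamma$ from $x$ to $y$ with $E_\infty(\bar\gamma)<\infty$. Minimality and Lemma~\ref{lemma:monotone} then give
\[
m_q:=\min E_q = E_q(\gamma_q)\le E_q(\bar\gamma)=E_\infty(\bar\gamma).
\]
Monotonicity of $g_q$ in $q$ shows that $m_q$ is nondecreasing, and the same argument with $\bar\gamma$ replaced by any horizontal curve yields $m_q\le \inf E_\infty$. Consequently $E_1(\gamma_q)\le E_q(\gamma_q)\le C$ uniformly in $q$. By Cauchy--Schwarz, $d_g(\gamma_q(s),\gamma_q(t))\le \sqrt{2C}\,|t-s|^{1/2}$, so the family $\{\gamma_q\}$ is equi-H\"older. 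All images lie in the closed $d_g$-ball of radius $\sqrt{2C}$ about $x$, which is compact by completeness and Hopf--Rinow.

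\emph{Extraction and identification of the limit.} Arzel\`a--Ascoli gives a subsequence $\gamma_{q_j}\to\gamma$ uniformly in $\mathcal C_{x,y}$. Theorem~\ref{thm:gammaC0} combined with Theorem~\ref{thm:fundamentalgamma}, applied to the sequence $E_{q_j}$, shows that $\gamma$ minimises $E_\infty$. In particular $\gamma$ is horizontal and $H^1$, and it has constant speed, since otherwise reparametrising it to constant speed would strictly lower $E_\infty$. The liminf inequality together with $m_q\le\min E_\infty$ gives
\[
\min E_\infty=E_\infty(\gamma)\le\liminf_j m_{q_j}\le\limsup_j m_{q_j}\le \min E_\infty.
\]
Hence $m_{q_j}\to\min E_\infty$, and by monotonicity of $m_q$ the whole sequence satisfies $m_q\nearrow\min E_\infty$. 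This proves the final claim and the energy part of (ii).

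\emph{Convergence of energy components and (iii).} Fix $r$. Lemma~\ref{lemma:lsc} gives
\[
E_r(\gamma)\le\liminf_j E_r(\gamma_{q_j}).
\]
Since $\gamma$ is horizontal, $E_r(\gamma)=E_\infty(\gamma)$, and $E_1(\gamma)=E_\infty(\gamma)$ as well. Write
\[
E_{q_j}(\gamma_{q_j}) = E_1(\gamma_{q_j}) + \tfrac{q_j-1}{2}\int g(P^\perp\dot\gamma_{q_j},P^\perp\dot\gamma_{q_j})\,dt .
\]
The left side tends to $E_\infty(\gamma)$, and $\liminf_j E_1(\gamma_{q_j})\ge E_\infty(\gamma)$. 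It follows that $E_1(\gamma_{q_j})\to E_\infty(\gamma)$ and that $(q_j-1)\int g(P^\perp\dot\gamma_{q_j},P^\perp\dot\gamma_{q_j})\,dt\to0$, which gives (iii). The length statement in (ii) follows because $\gamma_q$ has constant $g_q$-speed, being a Riemannian geodesic. Thus $l_q(\gamma_q)=\sqrt{2E_q(\gamma_q)}$, while $\gamma$ has constant speed, so $l_\infty(\gamma)=\sqrt{2E_\infty(\gamma)}$.

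\emph{Upgrade to strong $H^1$ convergence.} This step is the main obstacle. The plan is to work on the partition and charts of Remark~\ref{rem:whitneychart}(3), as in the proof of Lemma~\ref{lemma:lsc}, with coordinates $\xi_j\to\xi$ on each $I_k$. The energy bound and Lemma~\ref{lemma:weakconv} give $\dot\xi_j\rightharpoonup\dot\xi$ weakly in $L^2(I_k)$; since the limit is identified, no further subsequence is needed for this. To get strong convergence I would use the weighted norm $\|\cdot\|_{L^2_{g}}$ with coefficients frozen along $\xi$, for which $\|\dot\xi_j\|_{L^2_g}\to\|\dot\xi\|_{L^2_g}$ is needed on each piece.

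On the whole interval, $\sum_k \|\dot\xi_j\|^2_{L^2_g}\to\sum_k\|\dot\xi\|^2_{L^2_g}$ follows from $E_1(\gamma_{q_j})\to E_1(\gamma)$ together with the coefficient-freezing error estimate from Lemma~\ref{lemma:lsc}. Each summand is asymptotically at least its limit by weak lower semicontinuity, and a sum of terms with these two properties forces each term to converge. Weak convergence plus convergence of norms in a Hilbert space gives strong convergence of $\dot\xi_j$ in $L^2(I_k)$. Applying the chain rule to the Whitney coordinates $f^l=F\circ\xi$ then gives $\rho^1_{1,f^l}\to0$, and uniform convergence handles $\rho^1_{0,f^l}$, which proves (i).

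The delicate points are making the freezing-error estimate uniform (it uses the bound $E_1(\gamma_{q_j})\le C$ and uniform continuity of $g_{ab}$ on $\overline U_k$), and ensuring that the chart-wise statements assemble correctly via Remark~\ref{rem:whitneychart}(1).
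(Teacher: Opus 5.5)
Your proposal is correct and follows essentially the same route as the paper: a Chow--Rashevskii competitor gives the uniform energy bound, then Arzel\`a--Ascoli, then $\Gamma$-convergence with the fundamental theorem, then the splitting $E_{q_j} = E_1 + \tfrac{q_j-1}{2}\int g(P^\perp\dot\gamma,P^\perp\dot\gamma)$ for (iii), then localising energy convergence to the chart pieces and applying Radon--Riesz. The only differences are cosmetic. You obtain constant speed by contradiction with a constant-speed reparametrisation rather than through the paper's Cauchy--Schwarz chain, and you localise after freezing the coefficients rather than before.
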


We begin by constructing the limiting curve as a uniform limit of a sequence of penalised geodesics.

\begin{proposition}\label{thm:limitconstruct}
    Let $(M,g)$, $D$, $x$, $y$ and $\{\gamma_q\}$ be as in Theorem \ref{thm:penalisedconvergence}. (Equivalently, each $\gamma_q$ is a constant-speed minimising geodesic of $g_q$ connecting $x$ and $y$; such a minimiser exists since $(M,g_q)$ is complete.) Then there exists a sequence $q_j \to \infty$ such that $\{\gamma_{q_j}\}$ converges uniformly to a limiting curve $\gamma \in \mathcal{C}_{x,y}$. 
\end{proposition}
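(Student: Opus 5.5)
The plan is to apply the Arzel\`{a}--Ascoli theorem to the family $\{\gamma_q\}$, viewed as maps $[0,1] \to (M,d_g)$. This requires a uniform energy bound, equicontinuity, and pointwise relative compactness.

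\textbf{Step 1: uniform energy bound.} By the Chow--Rashevskii theorem there is a horizontal curve $\sigma \in H^1([0,1],M,x,y,D)$ with $E_\infty(\sigma) < \infty$. By Lemma \ref{lemma:monotone}, $E_q(\sigma) = E_\infty(\sigma)$ for every $q$. Minimality of $\gamma_q$ then gives
\begin{equation}
    E_q(\gamma_q) \leq E_q(\sigma) = E_\infty(\sigma) =: C \quad \text{for all } q \in \Npos.
\end{equation}
Since $g \leq g_q$, it follows that $\frac{1}{2}\int_0^1 g(\dot{\gamma}_q,\dot{\gamma}_q)\,dt \leq C$ for all $q$.

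\textbf{Step 2: equi-H\"{o}lder continuity.} For $0 \leq s < t \leq 1$, Cauchy--Schwarz gives
\begin{equation}
    d_g(\gamma_q(s),\gamma_q(t)) \leq \int_s^t \sqrt{g(\dot{\gamma}_q,\dot{\gamma}_q)}\,d\tau \leq \sqrt{2C}\,|t-s|^{1/2}.
\end{equation}
This holds for $H^1$ curves, since the $g$-length of an absolutely continuous curve bounds the distance between its endpoints. Hence the family is equi-$\tfrac12$-H\"{o}lder, and in particular equicontinuous, uniformly in $q$.

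\textbf{Step 3: compactness of the range.} Taking $s=0$ gives $d_g(x,\gamma_q(t)) \leq \sqrt{2C}$. So every $\gamma_q$ takes values in the closed ball $\overline{B}_g(x,\sqrt{2C})$. Because $(M,g)$ is complete, the Hopf--Rinow theorem says this closed bounded set is compact.

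The Arzel\`{a}--Ascoli theorem for maps from the compact interval $[0,1]$ into a compact metric space then yields a subsequence $q_j \to \infty$ along which $\gamma_{q_j}$ converges uniformly to a continuous $\gamma$. Uniform convergence preserves the endpoint values, so $\gamma(0)=x$ and $\gamma(1)=y$, i.e.\ $\gamma \in \mathcal{C}_{x,y}$.

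The main obstacle is ensuring the bounds in Steps 1--3 do not deteriorate as $q \to \infty$, even though the metrics $g_q$ blow up. Uniformity is obtained by comparing every $g_q$ from below with the fixed metric $g$, and bounding the energies above through a single horizontal competitor whose energy is independent of $q$. Completeness of $g$ is exactly what is needed to make the bounded ball compact.
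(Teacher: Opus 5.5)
Your proposal is correct and follows the paper's proof essentially step for step. A single horizontal Chow--Rashevskii competitor, combined with $g \leq g_q$, gives the $q$-independent energy bound; Cauchy--Schwarz gives the uniform H\"older-$\tfrac12$ estimate; Hopf--Rinow makes $\overline{B}_g(x,\sqrt{2C})$ compact; and Arzel\`a--Ascoli finishes. The only cosmetic difference is that the paper explicitly reparametrises the Chow--Rashevskii curve to constant speed to place it in $H^1([0,1],M,x,y,D)$, whereas you take membership in $H^1$ for granted.
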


\begin{proof}
Since $D$ is bracket-generating, the Chow--Rashevskii theorem provides a horizontal curve joining $x$ to $y$; reparametrising to constant speed, we obtain $\sigma \in H^1([0,1],M,x,y,D)$ with $C := E_\infty(\sigma) < \infty$. As $\sigma$ is horizontal, $E_q(\sigma) = E_\infty(\sigma) = C$ for every $q$. Since $\gamma_q$ minimises $E_q$ and $g \leq g_q$,
\begin{equation}
    \frac{1}{2}\int_0^1 g(\dot{\gamma}_q,\dot{\gamma}_q)\,dt \;=\; E_1(\gamma_q) \;\leq\; E_q(\gamma_q) \;\leq\; E_q(\sigma) \;=\; C,
\end{equation}
so that $\int_0^1 g(\dot{\gamma}_q,\dot{\gamma}_q)\,dt \leq 2C$. By the Cauchy--Schwarz inequality, for $0 \leq s \leq t \leq 1$,
\begin{align}
\begin{split}
    d_g(\gamma_q(s),\gamma_q(t)) \;&\leq\; \int_s^t \sqrt{g(\dot{\gamma}_q,\dot{\gamma}_q)}\,dr \\ &\leq\; \left(\int_s^t g(\dot{\gamma}_q,\dot{\gamma}_q)\,dr\right)^{1/2}|t-s|^{1/2} \\ &\leq\; (2C)^{1/2}|t-s|^{1/2},
\end{split}
\end{align}
so the sequence $\{\gamma_q\}$ is uniformly equicontinuous (indeed uniformly H\"older-$\tfrac12$). All images lie in the closed ball $\overline{B}_g(x,(2C)^{1/2})$, which is compact since $(M,g)$ is complete, by the Hopf--Rinow theorem. Hence, by the Arzel\`{a}--Ascoli theorem, there exists a sequence $q_j \to \infty$ such that $\{\gamma_{q_j}\}$ converges uniformly to a limiting path $\gamma \in C^0([0,1],M)$; since uniform convergence preserves the endpoints, $\gamma \in \mathcal{C}_{x,y}$. 
\end{proof}

\begin{proposition}\label{thm:limitisminimiser}
Let $\{\gamma_{q_j}\}$ and $\gamma$ be as in Proposition \ref{thm:limitconstruct}. Then $\gamma$ is a minimiser of $E_\infty$ over $\mathcal{C}_{x,y}$; in particular, $\gamma \in H^1([0,1],M,x,y,D)$ is horizontal, with $E_\infty(\gamma) = \min_{\mathcal{C}_{x,y}} E_\infty \leq C < \infty$. Moreover, $E_{q_j}(\gamma_{q_j}) \to E_\infty(\gamma)$.
\end{proposition}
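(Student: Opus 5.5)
The plan is to combine the $\Gamma$-convergence of Theorem \ref{thm:gammaC0} with the fundamental theorem of $\Gamma$-convergence (Theorem \ref{thm:fundamentalgamma}), applied on the metric space $(\mathcal{C}_{x,y},d_{C^0})$ along the subsequence $q_j$. Strictly, Theorem \ref{thm:fundamentalgamma} concerns accumulation points of a sequence of minimisers of $F_n$. Here we set $F_j := E_{q_j}$, so that $\gamma_{q_j}$ minimises $F_j$ over $\mathcal{C}_{x,y}$. By Definition \ref{def:gammaconvergence}, $F_j \xrightarrow{\Gamma} E_\infty$, since $\{q_j\}$ is itself a sequence tending to infinity. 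Since $\gamma_{q_j}\to\gamma$ uniformly, $\gamma$ is an accumulation point and is therefore a minimiser of $E_\infty$.

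To be self-contained, I would in fact rederive this directly, since the argument also produces the energy convergence. For any $\sigma\in\mathcal{C}_{x,y}$ the liminf inequality together with minimality and the recovery sequence (the constant sequence) gives
\begin{equation}
E_\infty(\gamma)\le \liminf_{j} E_{q_j}(\gamma_{q_j}) \le \limsup_j E_{q_j}(\gamma_{q_j}) \le \limsup_j E_{q_j}(\sigma) = E_\infty(\sigma).
\end{equation}
Taking $\sigma$ to be the horizontal constant-speed curve of Proposition \ref{thm:limitconstruct}, with $E_\infty(\sigma)=C$, shows $E_\infty(\gamma)\le C<\infty$. By the definition of the extended functional this forces $\gamma\in H^1([0,1],M,x,y,D)$, i.e.\ $\gamma$ is horizontal. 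Since $\sigma$ was arbitrary, $\gamma$ attains $\inf E_\infty$, so the infimum is a minimum.

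For the convergence of energies, I would choose $\sigma=\gamma$ in the chain above. This gives $\limsup_j E_{q_j}(\gamma_{q_j})\le E_\infty(\gamma)\le \liminf_j E_{q_j}(\gamma_{q_j})$, hence $E_{q_j}(\gamma_{q_j})\to E_\infty(\gamma)$. Here the recovery sequence is simply $E_{q}(\gamma)=E_\infty(\gamma)$ for all $q$, because $\gamma$ is horizontal (Lemma \ref{lemma:monotone}).

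There is no real obstacle: the heavy lifting, namely lower semicontinuity under uniform convergence, was done in Lemma \ref{lemma:lsc} and Theorem \ref{thm:gammaC0}. The only points needing care are these:
\begin{itemize}
\item checking that the liminf inequality is applied along the specific subsequence $q_j$, which is legitimate by the subsequence clause of Definition \ref{def:gammaconvergence};
\item noting that finiteness of $E_\infty(\gamma)$ is exactly what yields both Sobolev regularity and horizontality of $\gamma$.
\end{itemize}
The constant-speed assertion of Theorem \ref{thm:penalisedconvergence} is not part of this proposition; if it were needed, it would follow from minimality of energy via the standard Cauchy--Schwarz reparametrisation argument.
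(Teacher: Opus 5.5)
Your proposal is correct and follows essentially the paper's proof. The fundamental theorem of $\Gamma$-convergence gives minimality, testing against the Chow--Rashevskii competitor gives finiteness and hence horizontality, and sandwiching the liminf bound against minimality with $\gamma$ as competitor gives $E_{q_j}(\gamma_{q_j}) \to E_\infty(\gamma)$. The only difference is cosmetic: the paper re-derives the liminf bound from monotonicity and Lemma \ref{lemma:lsc} at each fixed $r$, whereas you cite the $\Gamma$-liminf inequality of Theorem \ref{thm:gammaC0}, which was itself proved that way.
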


\begin{proof}
By Theorem \ref{thm:gammaC0}, $E_{q_j} \xrightarrow[]{\Gamma} E_\infty$ on $(\mathcal{C}_{x,y}, d_{C^0})$. Since $\gamma_{q_j}$ minimises $E_{q_j}$ and $\gamma_{q_j} \to \gamma$ in this space, the fundamental theorem of $\Gamma$-convergence (Theorem \ref{thm:fundamentalgamma}) shows that $\gamma$ is a minimiser of $E_\infty$. Testing against the competitor $\sigma$ from the proof of Proposition \ref{thm:limitconstruct} gives $E_\infty(\gamma) \leq E_\infty(\sigma) = C < \infty$; in particular $\gamma \in H^1([0,1],M,x,y,D)$ is horizontal. 

For the convergence of energies, minimality and the horizontality of $\gamma$ give
\begin{equation}
    E_{q_j}(\gamma_{q_j}) \;\leq\; E_{q_j}(\gamma) \;=\; E_\infty(\gamma) \qquad \text{for every } j,
\end{equation}
while for each fixed $r \geq 1$, monotonicity (Lemma \ref{lemma:monotone}) and Lemma \ref{lemma:lsc} give 
\begin{equation}
    \liminf_j E_{q_j}(\gamma_{q_j}) \geq \liminf_j E_r(\gamma_{q_j}) \geq E_r(\gamma).
\end{equation} Taking the supremum over $r$ yields $\liminf_j E_{q_j}(\gamma_{q_j}) \geq E_\infty(\gamma)$.
\end{proof}

However, uniform convergence is not enough for control-theoretic purposes: we also wish to control the derivatives $\dot{\gamma}_{q_j}$. We therefore upgrade the convergence to strong convergence in $H^1([0,1],M)$.

\begin{proposition}\label{thm:limitweakdiff}
Let $\{\gamma_{q_j}\}$ and $\gamma$ be as in Proposition~\ref{thm:limitconstruct}. Then $\gamma_{q_j} \to \gamma$ in $H^1([0,1],M)$.  Moreover,
\begin{equation}
    E_1(\gamma_{q_j}) \to E_1(\gamma) \quad \text{and} \quad q_j\int_0^1 g\bigl(P^\perp\dot{\gamma}_{q_j}, P^\perp\dot{\gamma}_{q_j}\bigr)\,dt \to 0.
\end{equation}
\end{proposition}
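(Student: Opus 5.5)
The plan is to upgrade the uniform convergence of Proposition~\ref{thm:limitconstruct} to strong $H^1$ convergence via the standard ``weak convergence plus convergence of norms'' argument, carried out chart by chart. First I would split the energy. By Proposition~\ref{thm:limitisminimiser}, $E_{q_j}(\gamma_{q_j}) \to E_\infty(\gamma) = E_1(\gamma)$, the last equality holding because $\gamma$ is horizontal. Writing
\begin{equation}
E_{q_j}(\gamma_{q_j}) = E_1(\gamma_{q_j}) + \tfrac{q_j-1}{2}\int_0^1 g(P^\perp\dot\gamma_{q_j},P^\perp\dot\gamma_{q_j})\,dt,
\end{equation}
Lemma~\ref{lemma:lsc} with $q=1$ gives $\liminf_j E_1(\gamma_{q_j}) \ge E_1(\gamma)$. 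Together with the nonnegativity of the second term, this forces $\limsup_j E_1(\gamma_{q_j}) \le E_1(\gamma)$. Hence $E_1(\gamma_{q_j}) \to E_1(\gamma)$ and $(q_j-1)\int g(P^\perp\dot\gamma_{q_j},P^\perp\dot\gamma_{q_j})\to 0$. Since the integral is bounded by $2C$, the statement with $q_j$ in place of $q_j-1$ follows as well.

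Next, I would localise using Remark~\ref{rem:whitneychart}(3). This gives a partition into intervals $I_k$ and charts $\varphi_k$ built from Whitney coordinate functions, with images contained in $U_k$ for $j\ge j_0$. It then suffices to show $\xi_j:=\varphi_k\circ\gamma_{q_j}\to\xi:=\varphi_k\circ\gamma$ in $H^1(I_k,\R^n)$ for each $k$. The $L^2$ part is immediate from uniform convergence, so only $\dot\xi_j\to\dot\xi$ in $L^2$ needs work. By the energy bound and ellipticity of $g=g_1$ on $\overline U_k$, $\dot\xi_j$ is bounded in $L^2$. By Lemma~\ref{lemma:weakconv}, every weak subsequential limit equals $\dot\xi$, so the whole sequence satisfies $\dot\xi_j\rightharpoonup\dot\xi$. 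The same therefore holds in the equivalent weighted Hilbert space $L^2_{g}(I_k)$ from the proof of Lemma~\ref{lemma:lsc}, with coefficients frozen along $\xi$.

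The main obstacle is obtaining norm convergence on each $I_k$ separately, since we only know convergence of the total energy $E_1$. As in the proof of Lemma~\ref{lemma:lsc}, the frozen-coefficient error $\int_{I_k}[g_{ab}(\xi_j)-g_{ab}(\xi)]\dot\xi_j^a\dot\xi_j^b$ tends to $0$. Thus $\|\dot\xi_j\|_{L^2_g(I_k)}^2 - 2E_1(\gamma_{q_j};I_k)\to 0$. Lemma~\ref{lemma:lsc} applied on each $I_k$ gives $\liminf_j E_1(\gamma_{q_j};I_k)\ge E_1(\gamma;I_k)$. The sum over $k$ of these liminfs is at most $\lim_j E_1(\gamma_{q_j})=E_1(\gamma)=\sum_k E_1(\gamma;I_k)$. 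Each inequality must therefore be an equality, and in fact $\limsup_j E_1(\gamma_{q_j};I_k)\le E_1(\gamma)-\sum_{k'\ne k}\liminf_j E_1(\gamma_{q_j};I_{k'})\le E_1(\gamma;I_k)$, so $E_1(\gamma_{q_j};I_k)\to E_1(\gamma;I_k)$. It follows that $\|\dot\xi_j\|_{L^2_g}\to\|\dot\xi\|_{L^2_g}$. Weak convergence plus norm convergence in a Hilbert space gives strong convergence, and by norm equivalence $\dot\xi_j\to\dot\xi$ in $L^2(I_k)$.

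Finally, convergence in each chart coordinate controls the semimetrics $\rho^1_{a,f}$ for all Whitney coordinate functions $f$ on $I_k$. This holds because each $f^l$ is a smooth function of the chart coordinates on $\overline U_k$, and composition with a $C^1$ function with bounded derivatives preserves $H^1$ convergence when combined with uniform convergence. Remark~\ref{rem:whitneychart}(1),(3) then yields $\gamma_{q_j}\to\gamma$ in $H^1([0,1],M)$.
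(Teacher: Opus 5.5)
Your proposal is correct and follows the paper's proof essentially step for step: you split the energy and apply Lemma~\ref{lemma:lsc} at $q=1$, localise via Remark~\ref{rem:whitneychart}(3), use the same $\limsup$/$\liminf$ bookkeeping to get convergence of the energies on each $I_k$, pass to frozen coefficients and combine weak convergence with norm convergence (Radon--Riesz), and finish with the chain rule. One small fix: to pass from $(q_j-1)\int_0^1 g(P^\perp\dot\gamma_{q_j},P^\perp\dot\gamma_{q_j})\,dt\to0$ to the version with $q_j$, use $q_j/(q_j-1)\to1$ (equivalently, the integral itself tends to $0$), not the bound by $2C$.
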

 
\begin{proof}
The proof follows the scheme of the proof of Lemma~\ref{lemma:lsc}.We reduce to a computation in local coordinates, followed by a weak-compactness argument; the only new ingredient is the convergence of energies supplied by minimality, which upgrades the weak lower semicontinuity of norms used there to convergence of norms, and hence, via the Radon--Riesz property, to strong convergence.
 
Write $\mathcal{E} := E_\infty(\gamma) = \min E_\infty$; since $\gamma$ is horizontal, $E_1(\gamma) = \mathcal{E}$. By Proposition~\ref{thm:limitisminimiser}, $E_{q_j}(\gamma_{q_j}) \to \mathcal{E}$. Since $E_1(\gamma_{q_j}) \leq E_{q_j}(\gamma_{q_j})$, we have $\limsup_j E_1(\gamma_{q_j}) \leq \mathcal{E}$, while Lemma~\ref{lemma:lsc} (with $q = 1$) gives $\liminf_j E_1(\gamma_{q_j}) \geq E_1(\gamma) = \mathcal{E}$; hence $E_1(\gamma_{q_j}) \to E_1(\gamma)$. Consequently,
\begin{align}
\begin{split}
    E_{q_j}(\gamma_{q_j}) - E_1(\gamma_{q_j}) &= \frac{q_j - 1}{2}\int_0^1 g\bigl(P^\perp\dot{\gamma}_{q_j}, P^\perp\dot{\gamma}_{q_j}\bigr)\,dt \\ &\longrightarrow\; 0,
\end{split}
\end{align}
and since $q_j/(q_j - 1) \to 1$, the second claim follows.

For the $H^1$-convergence, we reduce to a computation in local coordinates. By uniform convergence and Remark~\ref{rem:whitneychart}(3), fix a partition $0 = a_0 < a_1 < \dots < a_K = 1$ with $I_m := [a_{m-1},a_m]$, charts $(U_m, \varphi_m)$ with $\overline{U}_m$ compact, and an index $j_0$ such that $\gamma(I_m) \subseteq U_m$ and $\gamma_{q_j}(I_m) \subseteq U_m$ for all $j \geq j_0$ and all $m$; we discard the finitely many indices $j < j_0$. Recall from Subsection \ref{sec:gammaenergy} the energies on subintervals, $E_1(\sigma; I_m) = \frac{1}{2}\int_{I_m} g(\dot{\sigma},\dot{\sigma})\,dt$, so that $E_1 = \sum_m E_1(\cdot\,;I_m)$.

Fix $k$. By the previous paragraphs the limit $\lim_j E_1(\gamma_{q_j})$ exists, so
\begin{align}
\begin{split}
    E_1(\gamma) &= \lim_{j} E_1(\gamma_{q_j})
    = \underset{j\to\infty}{\lim\sup} \sum_{m} E_1(\gamma_{q_j};I_m) \\
    &\geq \underset{j\to\infty}{\lim\sup}\, E_1(\gamma_{q_j};I_k) + \underset{j\to\infty}{\lim\inf} \sum_{m \neq k} E_1(\gamma_{q_j};I_m) \\
    &\geq \underset{j\to\infty}{\lim\sup}\, E_1(\gamma_{q_j};I_k) + \sum_{m \neq k} \underset{j\to\infty}{\lim\inf}\, E_1(\gamma_{q_j};I_m) \\
    &\geq \underset{j\to\infty}{\lim\sup}\, E_1(\gamma_{q_j};I_k) + \sum_{m \neq k} E_1(\gamma;I_m),
\end{split}
\end{align}
using, in order: the existence of the limit together with the decomposition $E_1 = \sum_m E_1(\cdot\,;I_m)$, the inequality $\limsup_j (a_j + c_j) \geq \limsup_j a_j + \liminf_j c_j$, the superadditivity of the limit inferior over the finite sum, and Lemma~\ref{lemma:lsc} on each $I_m$. Subtracting the finite quantity $\sum_{m \neq k} E_1(\gamma;I_m)$ from both sides and using $E_1(\gamma) = \sum_m E_1(\gamma;I_m)$ gives 
\begin{equation}
    \underset{j\to\infty}{\lim\sup}\, E_1(\gamma_{q_j};I_k) \;\leq\; E_1(\gamma;I_k);
\end{equation}
combined with Lemma~\ref{lemma:lsc} applied on $I_k$, we conclude that $E_1(\gamma_{q_j};I_k) \to E_1(\gamma;I_k)$ for every $k$.

Continuing with $k$ fixed, write $\xi_j := \varphi_k \circ \gamma_{q_j}|_{I_k}$ and $\xi := \varphi_k \circ \gamma|_{I_k}$, and retrace the proof of Lemma~\ref{lemma:lsc} with $q = 1$: the derivatives $\dot{\xi}_j$ are bounded in $L^2(I_k,\R^n)$, so every subsequence contains a further weakly convergent subsequence, whose limit is $\dot{\xi}$ by Lemma~\ref{lemma:weakconv}; hence $\dot{\xi}_j \rightharpoonup \dot{\xi}$ weakly in $L^2(I_k,\R^n)$. Moreover, the coefficient swap there shows $\norm{\dot{\xi}_j}_{L^2_g}^2 - 2E_1(\gamma_{q_j};I_k) \to 0$ in the pulled-back inner product $\langle u, w\rangle_{L^2_g} = \int_{I_k} g_{ab}(\xi(t))\,u^a w^b\,dt$. Where the proof of Lemma~\ref{lemma:lsc} could conclude only that $\norm{\dot{\xi}}_{L^2_g}^2 \leq \liminf_j \norm{\dot{\xi}_j}_{L^2_g}^2$, we now know $\norm{\dot{\xi}_j}_{L^2_g}^2 \to 2E_1(\gamma;I_k) = \norm{\dot{\xi}}_{L^2_g}^2$, and the identity
\begin{align}
\begin{split}
    \norm{\dot{\xi}_j - \dot{\xi}}_{L^2_g}^2 \;&=\; \norm{\dot{\xi}_j}_{L^2_g}^2 - 2\langle \dot{\xi}_j, \dot{\xi}\rangle_{L^2_g} + \norm{\dot{\xi}}_{L^2_g}^2 \\ &\longrightarrow\; \norm{\dot{\xi}}_{L^2_g}^2 - 2\norm{\dot{\xi}}_{L^2_g}^2 + \norm{\dot{\xi}}_{L^2_g}^2 \;=\; 0,
\end{split}
\end{align}
using weak convergence in the cross term and convergence of norms in the first, upgrades this to strong convergence (the Radon--Riesz property): $\dot{\xi}_j \to \dot{\xi}$ in $L^2(I_k,\R^n)$.

Finally, let $f \in C^\infty(M)$ be arbitrary. On each $I_k$, the chain rule gives $\frac{d}{dt}(f\circ\gamma_{q_j}) = \partial_a(f\circ\varphi_k^{-1})(\xi_j)\,\dot{\xi}_j^a$, a product of uniformly convergent coefficients and strongly $L^2$-convergent derivatives, so $\frac{d}{dt}(f\circ\gamma_{q_j}) \to \frac{d}{dt}(f\circ\gamma)$ in $L^2(I_k,\R)$. Summing over $k$ gives $\rho^1_{1,f}(\gamma_{q_j},\gamma) \to 0$, while $\rho^1_{0,f}(\gamma_{q_j},\gamma) \to 0$ follows from uniform convergence. As $f$ was arbitrary, $\gamma_{q_j} \to \gamma$ in $H^1([0,1],M)$.
\end{proof}

\begin{proof}[Proof of Theorem~\ref{thm:penalisedconvergence}]
Propositions \ref{thm:limitconstruct} and \ref{thm:limitisminimiser} produce a sequence $q_j \to \infty$ and a minimiser $\gamma$ of $E_\infty$ over $\mathcal{C}_{x,y}$ with $\gamma_{q_j} \to \gamma$ uniformly and $E_{q_j}(\gamma_{q_j}) \to E_\infty(\gamma) = \min E_\infty$; Proposition \ref{thm:limitweakdiff} upgrades the convergence to $H^1([0,1],M)$ and gives (iii).

It remains to prove the constant-speed and length assertions. Every horizontal curve $\sigma \in H^1([0,1],M,x,y,D)$ admits a constant-speed reparametrisation $\tilde{\sigma} \in H^1([0,1],M,x,y,D)$ with $l_\infty(\tilde{\sigma}) = l_\infty(\sigma)$ and $E_\infty(\tilde{\sigma}) = \frac{1}{2}l_\infty(\sigma)^2$ (see e.g. \cite[Section 3.6]{Agrachev_Barilari_Boscain_2019}), while the Cauchy--Schwarz inequality gives $2E_\infty(\sigma) \geq l_\infty(\sigma)^2$, with equality if and only if $\sigma$ has constant speed. Hence $\min E_\infty = \frac{1}{2}d_\infty(x,y)^2$. Since $l_\infty(\gamma) \geq d_\infty(x,y)$ by definition of $d_\infty$, while $l_\infty(\gamma)^2 \leq 2E_\infty(\gamma) = d_\infty(x,y)^2$ by the Cauchy--Schwarz inequality, equality holds throughout: $\gamma$ has constant speed and $l_\infty(\gamma) = d_\infty(x,y)$. Likewise, each $\gamma_{q_j}$, being a constant-speed minimising geodesic of $g_{q_j}$, satisfies $l_{q_j}(\gamma_{q_j})^2 = 2E_{q_j}(\gamma_{q_j})$, so by the convergence of energies, $l_{q_j}(\gamma_{q_j})^2 \to 2E_\infty(\gamma) = l_\infty(\gamma)^2$, proving (ii). (In particular $d_{q_j}(x,y) = l_{q_j}(\gamma_{q_j}) \to d_\infty(x,y)$, recovering the convergence of distances of \cite{ledonne}.)

For the final claim, $q \mapsto \min E_q$ is nondecreasing (Lemma \ref{lemma:monotone}) and bounded above by $\min E_\infty$, hence convergent as $q \to \infty$; by (ii) its limit along $\{q_j\}$ --- and therefore its limit --- is $\min E_\infty$.
\end{proof}

\subsection{Results for the full sequence}\label{sec:subsequenceselection}

The passage to a subsequence in Theorem~\ref{thm:penalisedconvergence} cannot be avoided in general: if $E_\infty$ admits several minimisers, the penalised geodesics may accumulate at more than one of them. Nevertheless, guarantees hold for the \emph{full} sequence $\{\gamma_q\}$, which is what matters if one computes $\gamma_q$ for a single large $q$. Both proofs below use the fact that the proof of Theorem~\ref{thm:penalisedconvergence} applies verbatim to any prescribed sequence $q_j \to \infty$. To measure distances in $H^1([0,1],M)$, let $\rho_{H^1} := \max\{\rho^1_{a,f^l} \,:\, a \in \{0,1\},\, l = 1,\dots,N\}$, where $\{f^l\}$ is the Whitney family of Proposition~\ref{prop:topchar}; then $\rho_{H^1}$ induces the topology of $H^1([0,1],M)$, and it separates points since $\varphi$ is an embedding.

\begin{proposition}[Convergence of the full sequence to the minimiser set]\label{prop:fullfamily}
Let the setting and the family $\{\gamma_q\}$ be as in Theorem~\ref{thm:penalisedconvergence}. Then, as $q \to \infty$:
\begin{enumerate}
    \item[\textup{(i)}] $\mathrm{dist}_{\rho_{H^1}}\bigl(\gamma_q,\, \arg\min E_\infty\bigr) \to 0$;
    \item[\textup{(ii)}] $E_1(\gamma_q) \to \min E_\infty$;
    \item[\textup{(iii)}] if the minimiser $\gamma$ of $E_\infty$ is unique, then $\gamma_q \to \gamma$ in $H^1([0,1],M)$.
\end{enumerate}
\end{proposition}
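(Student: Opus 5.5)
The plan is a standard subsequence-of-subsequence argument by contradiction. It rests on the observation that the proof of Theorem~\ref{thm:penalisedconvergence} applies to \emph{any} prescribed sequence $q_j \to \infty$. Indeed, the bounds in Proposition~\ref{thm:limitconstruct} (energy bound $E_1(\gamma_q) \leq C$, H\"older-$\tfrac12$ equicontinuity, images in a fixed compact ball) are uniform in $q$. So from any sequence $q_j \to \infty$ one can extract, via Arzel\`a--Ascoli, a further subsequence converging uniformly to some $\gamma \in \mathcal{C}_{x,y}$. Propositions~\ref{thm:limitisminimiser} and \ref{thm:limitweakdiff} then show that $\gamma \in \arg\min E_\infty$, that the convergence holds in $H^1([0,1],M)$, and that $E_1(\gamma_{q_{j_i}}) \to E_1(\gamma) = \min E_\infty$. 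The latter equality holds because $\gamma$ is horizontal.

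For (i), I would suppose the claim fails. Then there are $\varepsilon > 0$ and a sequence $q_j \to \infty$ with $\mathrm{dist}_{\rho_{H^1}}(\gamma_{q_j}, \arg\min E_\infty) \geq \varepsilon$ for all $j$. Extracting a further subsequence as above gives $\gamma_{q_{j_i}} \to \gamma$ in $H^1$ with $\gamma$ a minimiser. Since $\rho_{H^1}$ induces the $H^1$ topology, $\rho_{H^1}(\gamma_{q_{j_i}},\gamma) \to 0$, and hence the distance to the minimiser set tends to $0$, which is a contradiction.

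Part (ii) follows the same pattern. If $E_1(\gamma_q) \not\to \min E_\infty$, pick $q_j \to \infty$ with $|E_1(\gamma_{q_j}) - \min E_\infty| \geq \varepsilon$. A sub-subsequence satisfies $E_1(\gamma_{q_{j_i}}) \to E_1(\gamma) = E_\infty(\gamma) = \min E_\infty$ by Proposition~\ref{thm:limitweakdiff}, which is a contradiction. Alternatively, one can argue directly. We have $E_1(\gamma_q) \leq E_q(\gamma_q) = \min E_q \leq \min E_\infty$, which gives the upper bound, and the lower bound comes from the subsequence argument.

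For (iii), if $\arg\min E_\infty = \{\gamma\}$, then (i) reads $\rho_{H^1}(\gamma_q,\gamma) \to 0$, which is exactly $H^1$ convergence because $\rho_{H^1}$ induces the topology. Equivalently, every subsequence has a further subsequence converging to the unique minimiser $\gamma$.

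The only step needing care is the justification that the earlier propositions hold along an arbitrary sequence $q_j$ rather than the specific one chosen in Proposition~\ref{thm:limitconstruct}. This amounts to checking that nothing in those proofs used a particular choice of subsequence beyond $q_j \to \infty$. Theorem~\ref{thm:gammaC0} is stated for every $q_j \to \infty$, so the check reduces to bookkeeping. I would also remark that $\rho_{H^1}$ is only a pseudometric a priori, but it separates points, so distances to sets are well defined and the contradiction argument is unaffected.
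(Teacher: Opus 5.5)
Your proposal is correct and follows essentially the paper's argument. You apply Theorem~\ref{thm:penalisedconvergence} along an arbitrary sequence $q_j \to \infty$, run a sub-subsequence contradiction argument, and use the upper bound $E_1(\gamma_q) \leq \min E_q \leq \min E_\infty$ in (ii), all exactly as the paper does. The only cosmetic difference is that you deduce (iii) directly from (i) with $\arg\min E_\infty = \{\gamma\}$, whereas the paper reruns the sub-subsequence argument; both are immediate.
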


\begin{proof}
(i) Suppose not. Then there exist $\varepsilon > 0$ and a sequence $q_j \to \infty$ with $\mathrm{dist}_{\rho_{H^1}}(\gamma_{q_j}, \arg\min E_\infty) \geq \varepsilon$ for all $j$. Applying Theorem~\ref{thm:penalisedconvergence} to this sequence produces a further subsequence converging in $H^1([0,1],M)$ to a minimiser of $E_\infty$; since $\rho_{H^1}$ induces the topology of $H^1([0,1],M)$, the distance to the minimiser set tends to zero along this further subsequence, a contradiction.

(ii) Since $\gamma_q$ minimises $E_q$, testing against any horizontal minimiser gives $E_1(\gamma_q) \leq E_q(\gamma_q) = \min E_q \leq \min E_\infty$. Conversely, every sequence $q_j \to \infty$ admits, by Theorem~\ref{thm:penalisedconvergence} and Proposition~\ref{thm:limitweakdiff}, a further subsequence along which $E_1(\gamma_{q_j}) \to E_1(\gamma')$ for some minimiser $\gamma'$ of $E_\infty$; as $\gamma'$ is horizontal, $E_1(\gamma') = E_\infty(\gamma') = \min E_\infty$. A family each of whose sequences admits a further subsequence along which it converges to $\min E_\infty$ converges to $\min E_\infty$.

(iii) By the same argument, every sequence $q_j \to \infty$ admits a further subsequence along which $\gamma_{q_j} \to \gamma$ in $H^1([0,1],M)$, uniqueness forcing every subsequential limit to equal $\gamma$; since the topology of $H^1([0,1],M)$ is pseudometrisable, this yields $\gamma_q \to \gamma$ in $H^1([0,1],M)$.
\end{proof}

Thus, for $q$ large, $\gamma_q$ is uniformly close to \emph{some} solution $\gamma$ of the constrained problem (in general depending on $q$), and, in the charts of Remark~\ref{rem:whitneychart}, $\dot{\gamma}_q$ is $L^2$-close to $\dot{\gamma}$ --- whether or not $\gamma_q$ belongs to a distinguished subsequence. The constraint violation, moreover, admits a fully quantitative bound.

\begin{proposition}[A priori bound on the constraint violation]\label{prop:aprioribound}
In the setting of Theorem~\ref{thm:penalisedconvergence}, for every integer $q \geq 2$,
\begin{equation}
    \int_0^1 g\bigl(P^\perp\dot{\gamma}_q, P^\perp\dot{\gamma}_q\bigr)\,dt \;\leq\; \frac{2\min E_\infty - d_g(x,y)^2}{q-1};
\end{equation}
in particular, the normal component of $\dot{\gamma}_q$ vanishes in $L^2$ at the explicit rate $O(q^{-1/2})$, with a constant depending only on the data $(x,y,g,D)$.
\end{proposition}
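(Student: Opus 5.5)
The plan is to sandwich $E_q(\gamma_q)$ between an upper bound coming from the horizontal competitor and a lower bound from the Riemannian distance, and then isolate the normal component. The upper bound: $\gamma_q$ minimises $E_q$, and any minimiser $\gamma^\ast$ of $E_\infty$ is horizontal with $E_q(\gamma^\ast) = E_\infty(\gamma^\ast)$ (Lemma~\ref{lemma:monotone}). Hence $E_q(\gamma_q) \leq \min E_\infty$. A minimiser exists by Theorem~\ref{thm:penalisedconvergence}; alternatively one may test against a horizontal near-minimiser and take an infimum.

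Next, I split the energy orthogonally. Since $g(v,v) = g(Pv,Pv) + g(P^\perp v,P^\perp v)$, we have
\begin{equation*}
E_q(\gamma_q) = E_1(\gamma_q) + \frac{q-1}{2}\int_0^1 g\bigl(P^\perp\dot{\gamma}_q, P^\perp\dot{\gamma}_q\bigr)\,dt .
\end{equation*}
This is exactly the identity already used in the proof of Proposition~\ref{thm:limitweakdiff}.

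For the lower bound on $E_1(\gamma_q)$, I will use Cauchy--Schwarz as in the proof of Proposition~\ref{thm:limitconstruct}:
\begin{equation*}
d_g(x,y)^2 \leq \Bigl(\int_0^1 \sqrt{g(\dot{\gamma}_q,\dot{\gamma}_q)}\,dt\Bigr)^2 \leq \int_0^1 g(\dot{\gamma}_q,\dot{\gamma}_q)\,dt = 2E_1(\gamma_q).
\end{equation*}
This requires $d_g(x,y)$ to be bounded by the $g$-length of an $H^1$ curve. For absolutely continuous curves this is standard, since $d_g$ is the infimum of lengths over piecewise $C^1$ curves and the lengths agree by density/approximation. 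I would note this briefly rather than prove it.

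Combining the three steps gives
\begin{equation*}
\tfrac{1}{2}d_g(x,y)^2 + \tfrac{q-1}{2}\int_0^1 g(P^\perp\dot{\gamma}_q,P^\perp\dot{\gamma}_q)\,dt \leq \min E_\infty .
\end{equation*}
For $q \geq 2$ we may divide by $(q-1)/2 > 0$, which is the claimed bound. Taking square roots gives the $L^2$ rate $O(q^{-1/2})$. The constant $\sqrt{2\min E_\infty - d_g(x,y)^2}$ depends only on $(x,y,g,D)$ because $\min E_\infty = \tfrac12 d_\infty(x,y)^2$.

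I do not expect a real obstacle. The only delicate point is justifying $d_g(x,y) \le l_g(\gamma_q)$ for Sobolev curves. Here $\gamma_q$ is in fact a smooth $g_q$-geodesic, so it is $C^1$ and the inequality is immediate.
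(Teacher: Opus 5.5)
Your proposal is correct and matches the paper's proof: write the left-hand side as $2\bigl(E_q(\gamma_q)-E_1(\gamma_q)\bigr)/(q-1)$, bound $E_q(\gamma_q)=\min E_q\le\min E_\infty$ by testing against a horizontal minimiser, and bound $E_1(\gamma_q)\ge\frac{1}{2}d_g(x,y)^2$ by Cauchy--Schwarz. Your justification of $d_g(x,y)\le l_g(\gamma_q)$ is harmless extra detail; the paper uses this inequality without comment.
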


\begin{proof}
By the definition \eqref{eq:gq} of $g_q$, the left-hand side equals \\ $2\bigl(E_q(\gamma_q) - E_1(\gamma_q)\bigr)/(q-1)$. Since $\gamma_q$ minimises $E_q$, testing against any horizontal minimiser gives $E_q(\gamma_q) = \min E_q \leq \min E_\infty$, while the Cauchy--Schwarz inequality gives $E_1(\gamma_q) \geq \frac{1}{2}d_g(x,y)^2$.
\end{proof}

\begin{corollary}\label{cor:fullnormal}
In the setting of Theorem \ref{thm:penalisedconvergence}, for the full sequence of minimisers,
\begin{equation}
    q\int_0^1 g\bigl(P^\perp\dot{\gamma}_q, P^\perp\dot{\gamma}_q\bigr)\,dt \;\longrightarrow\; 0 \qquad \text{as } q \to \infty.
\end{equation}
\end{corollary}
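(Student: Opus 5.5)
The plan is to deduce the corollary from Theorem~\ref{thm:penalisedconvergence} by the same subsequence-of-subsequence principle already used in Proposition~\ref{prop:fullfamily}. The a priori bound of Proposition~\ref{prop:aprioribound} alone is not enough: it only says that $q\int_0^1 g(P^\perp\dot{\gamma}_q,P^\perp\dot{\gamma}_q)\,dt$ stays bounded by roughly $2\min E_\infty - d_g(x,y)^2$. Since that constant need not vanish, the argument has to use the energy convergence established along subsequences.

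Set $a_q := q\int_0^1 g(P^\perp\dot{\gamma}_q,P^\perp\dot{\gamma}_q)\,dt \geq 0$. Suppose, for contradiction, that $a_q \not\to 0$. Then there exist $\varepsilon>0$ and a sequence $q_j\to\infty$ with $a_{q_j}\geq\varepsilon$ for all $j$.

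The proof of Theorem~\ref{thm:penalisedconvergence} applies verbatim to any prescribed sequence $q_j \to\infty$. Propositions~\ref{thm:limitconstruct}, \ref{thm:limitisminimiser} and \ref{thm:limitweakdiff} then yield a further subsequence $q_{j_i}$ along which part (iii) of the theorem holds, that is, $a_{q_{j_i}}\to 0$. This contradicts $a_{q_j}\geq\varepsilon$.

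Alternatively, one can argue directly without contradiction. By Proposition~\ref{prop:fullfamily}(ii), $E_1(\gamma_q)\to\min E_\infty$. By the final claim of Theorem~\ref{thm:penalisedconvergence}, $E_q(\gamma_q)=\min E_q\to\min E_\infty$. Hence
\[
\frac{q-1}{2}\int_0^1 g\bigl(P^\perp\dot{\gamma}_q,P^\perp\dot{\gamma}_q\bigr)\,dt = E_q(\gamma_q)-E_1(\gamma_q)\longrightarrow 0 .
\]
Multiplying by $2q/(q-1)\to 2$ gives the claim.

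I expect no real obstacle here. The only point to check is that Proposition~\ref{prop:fullfamily}(ii) and the monotone limit of $\min E_q$ hold for the full family rather than along a subsequence, and both are stated that way.
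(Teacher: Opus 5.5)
Your proposal is correct, and your second (direct) argument is exactly the paper's proof. It combines $E_1(\gamma_q)\to\min E_\infty$ from Proposition~\ref{prop:fullfamily}(ii) with $\min E_q\nearrow\min E_\infty$ from Theorem~\ref{thm:penalisedconvergence}, then rescales by $q/(q-1)\to 1$. Your first argument, by contradiction via part (iii) of Theorem~\ref{thm:penalisedconvergence} along an arbitrary sequence $q_j\to\infty$, is also valid; it simply inlines the subsequence principle the paper uses to prove Proposition~\ref{prop:fullfamily}(ii).
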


\begin{proof}
By Proposition \ref{prop:fullfamily}(ii), $E_1(\gamma_q) \to \min E_\infty$, while $E_q(\gamma_q) = \min E_q \nearrow \min E_\infty$ by Theorem \ref{thm:penalisedconvergence}. Hence $(q-1)\int_0^1 g(P^\perp\dot{\gamma}_q,P^\perp\dot{\gamma}_q)\,dt = 2\bigl(E_q(\gamma_q) - E_1(\gamma_q)\bigr) \to 0$, and $q/(q-1) \to 1$.
\end{proof}

\subsection{The role of $\Gamma$-convergence}\label{sec:lscgamma}

A critical reader may ask whether the machinery of $\Gamma$-convergence deployed in this section is truly necessary. The short answer is no. $\Gamma$-convergence enters the proof of Theorem~\ref{thm:penalisedconvergence} exactly once: in Proposition~\ref{thm:limitisminimiser}, we invoke the fundamental theorem of $\Gamma$-convergence to conclude that the limit of the minimisers is a minimiser of the limit. Instead of appealing to the fundamental theorem of $\Gamma$-convergence, we can instead prove this directly using lower semicontinuity. 

Consider the sequence of minimisers $\gamma_{q_j}$, converging uniformly to $\gamma \in \mathcal{C}_{x,y}$ as constructed in Proposition ~\ref{thm:limitconstruct}. Fix $r \in \Npos$. Monotonicity (Lemma~\ref{lemma:monotone}) gives $E_{q_j}(\gamma_{q_j}) \geq E_r(\gamma_{q_j})$ once $q_j \geq r$, lower semicontinuity (Lemma~\ref{lemma:lsc}) gives $\liminf_j E_r(\gamma_{q_j}) \geq E_r(\gamma)$, and taking the supremum over $r$ yields
\begin{equation}\label{eq:directliminf}
    E_\infty(\gamma) \;=\; \sup_{r \in \Npos} E_r(\gamma) \;\leq\; \underset{j\to\infty}{\lim\inf}\, E_{q_j}(\gamma_{q_j}).
\end{equation}
On the other hand, for an arbitrary competitor $\eta \in \mathcal{C}_{x,y}$, minimality of $\gamma_{q_j}$ and monotonicity give $E_{q_j}(\gamma_{q_j}) \leq E_{q_j}(\eta) \leq E_\infty(\eta)$. Consequently,
\begin{equation}\label{eq:directsandwich}
\begin{split}
    E_\infty(\gamma) \;\leq\; \underset{j\to\infty}{\lim\inf}\, E_{q_j}(\gamma_{q_j}) \;&\leq\; \underset{j\to\infty}{\lim\sup}\, E_{q_j}(\gamma_{q_j}) \\ &\leq\; E_\infty(\eta).
\end{split}
\end{equation}
Since $\eta$ was arbitrary, $\gamma$ minimises $E_\infty$; taking $\eta$ to be the Chow--Rashevskii competitor of Proposition~\ref{thm:limitconstruct} shows that $E_\infty(\gamma) < \infty$, hence $\gamma$ is horizontal, and taking $\eta = \gamma$ recovers the convergence of energies with which the proof of Proposition~\ref{thm:limitweakdiff} begins.

However, this direct proof should not come as a surprise, nor should one conclude that invoking $\Gamma$-convergence is ``overkill". 

\begin{itemize}
    \item $\Gamma$-convergence of the energy functionals follows straightforwardly from lower semicontinuity (Lemma \ref{lemma:lsc}) and monotone pointwise convergence (Lemma \ref{lemma:monotone}). Indeed, the limit-inferior inequality in the definition of $\Gamma$-convergence is itself a lower semicontinuity condition, imposed not on a single functional but on the family $\{E_q\}$ jointly in the point and the index. The $\Gamma$-limit of a nondecreasing sequence of lower semicontinuous functionals is simply its pointwise supremum \citep{dalMaso} --- which Lemmas~\ref{lemma:lsc} and \ref{lemma:monotone} identify as $E_\infty$. 
    \item The fundamental theorem of $\Gamma$-convergence follows almost directly from the definition (see for instance \cite{dalMaso}). Indeed, Braides \cite{Braides2006} remarks that the definition of $\Gamma$-convergence can be understood as having been ``maieutically" reverse engineered precisely so that this theorem holds. 
    \item The genuinely difficult parts of the proof of Theorem~\ref{thm:penalisedconvergence} do not involve $\Gamma$-convergence at all. The bulk of Section \ref{sec:convergence} is dedicated to extracting a uniformly convergent subsequence using the Arzel\`{a}--Ascoli theorem, and upgrading uniform convergence to $H^1$-convergence once minimality has been established. 
\end{itemize}
Thus, while $\Gamma$-convergence is not indispensable to this work, it nonetheless provides a convenient and elegant framework in which to present our results. 

\section{Vakonomic control theory}\label{sec:vakanomic}

In the final section of this paper, we show how a natural class of optimal control problems can be reduced to the problem of finding a geodesic. These optimal control problems can be naturally formulated in terms of either hard or soft constraints on a component of the velocity (specifically, the component corresponding to the control field). By reformulating the hard- and soft-constrained problems as the problems of finding sub-Riemannian and penalised Riemannian geodesics respectively, we are able to show, by invoking the results of Section \ref{sec:convergence}, that under certain symmetry assumptions the optimal controls of the soft-constrained problem converge to those of the hard-constrained problem in $L^2$; the associated trajectories converge in $H^1([0,1],M)$. The main tool we use is a generalisation of the interaction picture transformation of quantum mechanics. We present an application of the results of this paper to the problem of unitary
gate synthesis in quantum optimal control. Finally, we sketch the case where the drift term does not satisfy any symmetry assumptions. 

\subsection{The geometric interaction picture} 
Many problems in geometric control theory can be reduced to the problem of finding a sub-Riemannian geodesic. Consider the minimisation problem
\begin{align}\label{eq:driftoptimalmanifold}
\begin{split}
    &\underset{(Y, \gamma)}\min \int_0^1g(Y(t),Y(t))\,dt \\
    \text{subject to}\;&\dot{\gamma}(t) = X(t, \gamma(t)) + Y(t)  \\
    &\gamma(0) = x, \; \gamma(1) = y.
\end{split}
\end{align}

Here, $X$ is a time-dependent vector field on $M$, i.e. a smooth map $X : \R \times M \to TM$ with $X(t,\cdot) \in \Gamma(M,TM)$ for each $t$, describing the intrinsic dynamics of the system, and $Y$ is a vector field along $\gamma$ that we think of as a control field. For a subbundle $F \subseteq TM$, we write $\Gamma(\gamma,F)$ for the space of square-integrable vector fields along $\gamma$ with values in $F$, i.e. measurable maps $t \mapsto Y(t) \in F_{\gamma(t)}$ with $\int_0^1 g(Y,Y)\,dt < \infty$. One may consider the unconstrained case $Y \in \Gamma(\gamma,TM)$ or the constrained case $Y \in \Gamma(\gamma,D)$, where $D$ is a bracket-generating subbundle. Minimisation is over pairs $(Y,\gamma)$ with $\gamma \in H^1([0,1],M)$ and the ODE holding for almost every $t$; we call such pairs \emph{admissible}. Clearly, if $X = 0$ we recover the usual problem of finding a (constrained or unconstrained) geodesic connecting $x$ and $y$. 

On the other hand, if $X$ is non-zero, we should think of the objective function in \eqref{eq:driftoptimalmanifold} as describing the energy added to the system by the control, a natural quantity to minimise. For instance, in the context of controlling the Schr\"odinger equation, this problem was studied in \cite{lawrence2025}. Despite the presence of non-trivial intrinsic dynamics, it is possible to reformulate this problem as an energy-minimisation problem of the type studied in Section \ref{sec:convergence}. This change of coordinates is a generalisation of the more familiar \textit{interaction picture} of quantum mechanics. 

\begin{assumption}\label{ass:flow}
The vector field $X$ is smooth, and its evolution operator is globally defined on the time interval of the control problem: for every $s \in [0,1]$ and $p \in M$, the solution $\varphi_{t,s}(p)$ of 
\begin{equation}
    \frac{d}{dt}\varphi_{t,s}(p) = X(t, \varphi_{t,s}(p)), \qquad \varphi_{s,s}(p) = p,
\end{equation}
exists for all $t \in [0,1]$. Writing $\varphi_t := \varphi_{t,0}$, each $\varphi_t : M \to M$ is then a diffeomorphism, with inverse $\varphi_{0,t}$.
\end{assumption}

Existence from arbitrary initial times, rather than from $t = 0$ alone, is what makes each $\varphi_t$ invertible; forward existence by itself does not (consider $X(x) = -e^x$ on $\R$, whose time-$t$ map is a diffeomorphism onto a proper subset).

\begin{proposition}\label{prop:interaction}
Let $\varphi_t$ be the evolution maps of Assumption \ref{ass:flow}. For an admissible pair $(Y,\gamma)$ for \eqref{eq:driftoptimalmanifold}, define
\begin{equation}\label{eq:zetadef}
    \zeta(t) := \varphi_t^{-1}(\gamma(t)), \qquad \tilde{Y}(t) := (d\varphi_t^{-1})_{\gamma(t)}\, Y(t) \;\in\; T_{\zeta(t)}M.
\end{equation}
Then $\zeta \in H^1([0,1],M)$ with $\zeta(0) = x$ and $\zeta(1) = \varphi_1^{-1}(y)$, and
\begin{equation}\label{eq:zetadotODE}
    \dot{\zeta}(t) = \tilde{Y}(t) \qquad \text{for almost every } t.
\end{equation}
Conversely, given $\zeta \in H^1([0,1],M)$ with these endpoints, the pair defined by $\gamma(t) := \varphi_t(\zeta(t))$ and $Y(t) := (d\varphi_t)_{\zeta(t)}\,\dot{\zeta}(t)$ is admissible for \eqref{eq:driftoptimalmanifold}. Moreover, for almost every $t$ and every $q \in \Npos$,
\begin{align}\label{eq:costidentity}\begin{split}
    g_{\gamma(t)}\bigl(Y(t),Y(t)\bigr) &= (\varphi_t^*g)_{\zeta(t)}\bigl(\tilde{Y}(t),\tilde{Y}(t)\bigr), \\
    (g_q)_{\gamma(t)}\bigl(Y(t),Y(t)\bigr) &= \bigl(\varphi_t^*(g_q)\bigr)_{\zeta(t)}\bigl(\tilde{Y}(t),\tilde{Y}(t)\bigr).
\end{split}
\end{align}
\end{proposition}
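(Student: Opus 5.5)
The plan is to work with the map $\Phi(t,p) := \varphi_t^{-1}(p) = \varphi_{0,t}(p)$, which is smooth in $(t,p)$ jointly by smooth dependence of ODE solutions on initial data and initial time (Assumption \ref{ass:flow}). Set $\zeta(t) = \Phi(t,\gamma(t))$. The first step is to show that $\zeta \in H^1([0,1],M)$ and to compute its derivative by a chain rule for Sobolev curves.

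For the chain rule we use the local-chart reduction of Remark \ref{rem:whitneychart}. We cover the compact image of $\gamma$ and partition $[0,1]$ so that on each $I_k$ both $\gamma(I_k)$ and $\zeta(I_k)$ lie in charts with compact closure. In coordinates, $\gamma$ is absolutely continuous with $L^2$ derivative, since $H^1([0,1],\R^n)$ functions are absolutely continuous. Composing with the smooth map $(t,x)\mapsto \Phi(t,x)$ therefore gives an absolutely continuous curve. The classical a.e.\ chain rule then applies:
\[
\dot\zeta(t) = \partial_t\Phi(t,\gamma(t)) + (d\varphi_t^{-1})_{\gamma(t)}\dot\gamma(t).
\]
This derivative lies in $L^2$ because the coefficients are bounded on compact sets. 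Composing with any smooth $f$ then shows $\zeta \in H^1$. The endpoint values follow from $\varphi_0 = \mathrm{id}$.

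The key identity, and the main computational point, is
\[
\partial_t\Phi(t,p) = -(d\varphi_t^{-1})_p X(t,p).
\]
We obtain it by differentiating $\varphi_t^{-1}(\varphi_t(z)) = z$ in $t$: this gives $\partial_t\Phi(t,\varphi_t z) + (d\varphi_t^{-1})\,X(t,\varphi_t z) = 0$, and we then set $p=\varphi_t z$. Substituting $\dot\gamma = X + Y$, the drift terms cancel and we get $\dot\zeta = (d\varphi_t^{-1})Y = \tilde Y$ a.e. Integrability of $\tilde Y$ is automatic from the same bounds.

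For the converse we define $\gamma(t) = \varphi_t(\zeta(t))$. The same chart argument, applied to the smooth map $(t,z)\mapsto\varphi_t(z)$, gives $\gamma\in H^1$ and
\[
\dot\gamma = X(t,\gamma) + (d\varphi_t)_{\zeta}\dot\zeta = X + Y \quad\text{a.e.}
\]
The endpoint values are $\gamma(0)=x$ and $\gamma(1)=\varphi_1(\varphi_1^{-1}(y))=y$. The condition $\int g(Y,Y)<\infty$ follows from boundedness of $d\varphi_t$ on the compact set $\{(t,\zeta(t))\}$. Hence the pair is admissible.

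The cost identities \eqref{eq:costidentity} are pure definitions of pullback:
\[
(\varphi_t^*h)_{\zeta}(\tilde Y,\tilde Y) = h_{\varphi_t\zeta}\bigl(d\varphi_t\tilde Y,\, d\varphi_t\tilde Y\bigr) = h_{\gamma}(Y,Y),
\]
applied with $h = g$ and $h = g_q$. Here we use $(d\varphi_t)_\zeta(d\varphi_t^{-1})_\gamma = \mathrm{id}$.

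The only delicate step is the justification of the Sobolev chain rule with a time-dependent diffeomorphism, in particular patching across charts. We handle it via Remark \ref{rem:whitneychart}(3) together with absolute continuity in $\R^n$.
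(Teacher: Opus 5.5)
Your proposal is correct and follows essentially the same route as the paper: joint smoothness of $(t,p)\mapsto\varphi_{0,t}(p)$ gives $\zeta\in H^1([0,1],M)$, a chain-rule computation cancels the drift to give $\dot\zeta=\tilde Y$ almost everywhere, the converse is the same computation in reverse, and the cost identities are the definition of the pullback. The differences are cosmetic. You differentiate $\zeta=\varphi_t^{-1}\circ\gamma$ directly, using $\partial_t\varphi_t^{-1}=-(d\varphi_t^{-1})X$, whereas the paper differentiates $\gamma=\varphi_t\circ\zeta$, where $\partial_t\varphi_t=X$ is immediate, and compares with the ODE; you also make explicit the chart-wise absolutely continuous chain rule that the paper leaves implicit.
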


\begin{proof}
The map $(t,p) \mapsto \varphi_t^{-1}(p) = \varphi_{0,t}(p)$ is smooth, so $\zeta \in H^1([0,1],M)$, and the endpoint values follow from $\varphi_0 = \mathrm{id}$ and $\gamma(1) = y$. Writing $\gamma(t) = \varphi_t(\zeta(t))$ and taking the total derivative gives, for almost every $t$,
\begin{equation}
     \dot{\gamma}
  = \frac{\partial \varphi_t}{\partial t}\bigg|_{\zeta}
    + (d\varphi_t)_\zeta\, \dot{\zeta}
  = X(t, \varphi_t(\zeta)) + (d\varphi_t)_\zeta\, \dot{\zeta};
\end{equation}
comparing with $\dot{\gamma} = X(t,\gamma) + Y(t)$ yields $(d\varphi_t)_\zeta\,\dot{\zeta} = Y(t)$, which is \eqref{eq:zetadotODE}. The converse is the same computation read in reverse. The identities \eqref{eq:costidentity} are the definition of the pullback metric applied to $\tilde{Y}(t) = (d\varphi_t^{-1})_{\gamma(t)}Y(t)$.
\end{proof}

\subsection{Transforming the metric} The next step is to reformulate the optimisation problem \eqref{eq:driftoptimalmanifold} in terms of $\zeta$ and $\tilde{Y}$. We begin by introducing the time-dependent pullback metric $\tilde{g}_t := \varphi_t^* g$; that is, for $p \in M$ and $V, W \in T_pM$,
\begin{equation}
    (\varphi_t^*g)_p(V,W) \;=\; g_{\varphi_t(p)}\bigl((d\varphi_t)_p V,\, (d\varphi_t)_p W\bigr).
\end{equation}
By \eqref{eq:costidentity}, the costs agree along corresponding pairs. Similarly, introducing the time-dependent subbundle $\tilde{D}_t := (d\varphi_t)^{-1}(D)$, we have $Y(t) \in D_{\gamma(t)}$ if and only if $\dot{\zeta}(t) \in (\tilde{D}_t)_{\zeta(t)}$; indeed, $(d\varphi_t)_p$ maps $\bigl((\tilde{D}_t)_p,\, \tilde{g}_t|_p\bigr)$ isometrically onto $\bigl(D_{\varphi_t(p)},\, g|_{\varphi_t(p)}\bigr)$, and likewise for the $\tilde{g}_t$- and $g$-orthogonal complements, so that $\varphi_t^*(g_q)$ coincides with the penalisation $(\tilde{g}_t)_q$ of $\tilde{g}_t$ with respect to $\tilde{D}_t$. By Proposition \ref{prop:interaction}, the optimisation problem \eqref{eq:driftoptimalmanifold} is therefore equivalent to the curve problem
\begin{align}\label{eq:Ytildeopt}
\begin{split}
    &\underset{\zeta}{\min}\int_0^1\tilde{g}_t(\dot{\zeta}(t), \dot{\zeta}(t))\,dt \\
    \text{subject to}\;&\zeta \in H^1([0,1],M), \\
    &\zeta(0) = x, \; \zeta(1) = \varphi_1^{-1}(y),
\end{split}
\end{align}
with the additional constraint $\dot{\zeta}(t) \in (\tilde{D}_t)_{\zeta(t)}$ for almost every $t$ in the constrained case; the control has been eliminated via $\tilde{Y} = \dot{\zeta}$. 

Due to the time dependence of $\tilde{g}_t$ and $\tilde{D}_t$, we have not yet reduced to the setting of Section \ref{sec:convergence}. Rather than treating the general case, which requires lifting the dynamics to the augmented manifold $M \times \R$ and constraining the additional coordinate, at some technical cost, we instead posit some additional assumptions on drift under which the time dependence can be removed entirely. The general case will be treated in a separate work. 

\begin{assumption}\label{ass:symmetry}
For every $t \in [0,1]$, writing $X_t := X(t,\cdot)$:
\begin{enumerate}
    \item[\textup{(i)}] $[X_t, \Gamma(D)] \subseteq \Gamma(D)$;
    \item[\textup{(ii)}] $\mathcal{L}_{X_t}\, g = \mu(t)\, g$ for a continuous function $\mu$ of time alone.
\end{enumerate}
\end{assumption}

Condition (i) states that the drift is an infinitesimal symmetry of the constraint bundle; condition (ii) that it is an infinitesimal homothety of the metric, with spatially constant rate. Both conditions are infinitesimal, and hence verifiable without integrating the flow. Given Assumption \ref{ass:symmetry}, define
\begin{equation}\label{eq:conformalclock}
    c(t)^2 := \exp\!\int_0^t \!\mu(r)\,dr, \qquad \tau(t) := \int_0^t \!c(r)^{-2}dr,
\end{equation}
together with $T := \tau(1)$ and the normalised clock $\hat{u}(t) := \tau(t)/T$, a $C^1$ diffeomorphism of $[0,1]$ with derivative bounded away from $0$ and $\infty$. 

\begin{lemma}\label{lemma:conformalstructure}
Under Assumptions \ref{ass:flow} and \ref{ass:symmetry}, for every $t \in [0,1]$ we have $\tilde{g}_t = c(t)^2\, g$ and $\tilde{D}_t = D$; moreover, the flow preserves $D^\perp$, so that for every $q \in \Npos$,
\begin{equation}
    (\tilde{g}_t)_q \;=\; c(t)^2\, g_q,
\end{equation}
and horizontality with respect to $\tilde{D}_t$ coincides with horizontality with respect to $D$.
\end{lemma}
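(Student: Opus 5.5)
The plan is to convert each infinitesimal condition of Assumption \ref{ass:symmetry} into a statement about the time-dependent evolution maps $\varphi_t = \varphi_{t,0}$. To do this we differentiate the pulled-back objects in $t$ and solve the resulting linear ODEs. The basic identity is the derivative of a pullback under a nonautonomous flow:
\begin{equation}
    \frac{d}{dt}\,\varphi_t^*T \;=\; \varphi_t^*\bigl(\mathcal{L}_{X_t}T\bigr)
\end{equation}
for any smooth covariant tensor field $T$. This holds for time-dependent flows, with $\varphi_{t+h} = \varphi_{t+h,t}\circ\varphi_t$ and $\varphi_{t+h,t}$ generated by $X_t$ to first order. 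Assumption \ref{ass:flow} ensures that all maps involved are globally defined diffeomorphisms.

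\emph{Step 1 (metric).} Set $T = g$. By Assumption \ref{ass:symmetry}(ii),
\begin{equation}
    \frac{d}{dt}\tilde g_t \;=\; \varphi_t^*(\mu(t)g) \;=\; \mu(t)\,\tilde g_t .
\end{equation}
At each fixed $p\in M$ and fixed pair $V,W\in T_pM$, this is a scalar linear ODE with initial value $g_p(V,W)$. Its solution is
\begin{equation}
    \tilde g_t = \exp\Bigl(\int_0^t\mu\Bigr)\,g = c(t)^2 g .
\end{equation}
Equivalently, each $(d\varphi_t)_p$ is a conformal linear isomorphism with factor $c(t)^2$.

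\emph{Step 2 (distribution).} This is the step I expect to be the main obstacle, since $D$ is not a tensor. I would describe $D$ by its annihilator. Locally choose $1$-forms $\theta^1,\dots,\theta^k$ spanning $\mathrm{Ann}(D)$. Condition (i) says $\mathcal{L}_{X_t}\theta^i$ annihilates $D$, because
\begin{equation}
    (\mathcal{L}_{X_t}\theta)(V) = X_t(\theta(V)) - \theta([X_t,V]) = 0
\end{equation}
for $V\in\Gamma(D)$. Hence $\mathcal{L}_{X_t}\theta^i = a^i_j(t,\cdot)\,\theta^j$ locally. This suggests proving the infinitesimal-to-global statement via the pushforward: for $V\in\Gamma(D)$, set $W_t := (\varphi_t^{-1})_*$-type transport, or more cleanly $V_t := (\varphi_{t})^*V = (d\varphi_t)^{-1}V\circ\varphi_t$. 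Then
\begin{equation}
    \frac{d}{dt}V_t = \varphi_t^*[X_t,V].
\end{equation}
To avoid local-frame issues along whole flow lines, I would argue pointwise along a curve. Fix $p$ and let $\Pi_t := (d\varphi_t)_p^{-1}(D_{\varphi_t(p)})\subseteq T_pM$, a $C^1$ family of subspaces. Take a local frame $V_1,\dots,V_r$ of $D$ near $\varphi_t(p)$; by (i), $[X_t,V_a] = b_a^c V_c$. Differentiating $e_a(s) := (d\varphi_s)_p^{-1}V_a(\varphi_s(p))$ gives
\begin{equation}
    \dot e_a = (d\varphi_s)^{-1}[X_s,V_a] = b_a^c\,e_c .
\end{equation}
The span of the $e_a(s)$ is therefore locally constant in $s$. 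Covering the compact trajectory $\{\varphi_s(p)\}$ by finitely many frame domains and connecting spans shows $\Pi_t = \Pi_0 = D_p$. Hence $\tilde D_t = D$.

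\emph{Step 3 (orthogonal complement and penalised metric).} Since $(d\varphi_t)_p$ scales $g$ by the constant $c(t)^2$, it preserves $g$-orthogonality. Combined with $(d\varphi_t)_p D_p = D_{\varphi_t(p)}$, this gives $(d\varphi_t)_p D_p^\perp = D^\perp_{\varphi_t(p)}$, so $d\varphi_t$ commutes with $P$ and $P^\perp$. Substituting into \eqref{eq:gq} yields
\begin{equation}
    \varphi_t^*(g_q) = c(t)^2\bigl(g(P\cdot,P\cdot)+q\,g(P^\perp\cdot,P^\perp\cdot)\bigr) = c(t)^2 g_q .
\end{equation}
By the discussion preceding \eqref{eq:Ytildeopt}, $\varphi_t^*(g_q)$ equals $(\tilde g_t)_q$. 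Horizontality with respect to $\tilde D_t$ coincides with horizontality with respect to $D$ because $\tilde D_t = D$.
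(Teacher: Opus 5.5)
Your proposal is correct and follows the paper's proof essentially step for step: the pullback identity $\frac{d}{dt}\varphi_t^*g = \varphi_t^*\mathcal{L}_{X_t}g = \mu(t)\,\varphi_t^*g$ for the metric, a linear ODE for the pulled-back frame vectors $\varphi_t^*V_a$ showing that their span stays equal to $D_p$, and the observation that a homothety preserving $D$ also preserves $D^\perp$ and commutes with the projections. Your pointwise version of Step 2, which patches finitely many frame domains along the compact trajectory $\{\varphi_s(p)\}$, is a useful tightening of the paper's argument: the paper tacitly uses a single local frame along the whole flow line. The annihilator detour at the start of that step is unnecessary and can be dropped.
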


\begin{proof}
For a time-dependent flow, $\frac{d}{dt}\varphi_t^*g = \varphi_t^*\mathcal{L}_{X_t}g = \mu(t)\,\varphi_t^*g$, whence $\varphi_t^*g = c(t)^2g$. For the bundle, let $\{V_i\}$ be a local frame of $D$ and write $[X_t, V_i] = \sum_j a_i^j(t,\cdot)\,V_j$ using (i). The pullbacks satisfy the linear system $\frac{d}{dt}(\varphi_t^*V_i) = \varphi_t^*[X_t,V_i] = \sum_j (a_i^j \circ \varphi_t)\,\varphi_t^*V_j$, whose solution with initial datum $V_i$ remains in the $C^\infty$-span of $\{V_j\}$, giving $(d\varphi_t)^{-1}(D) \subseteq D$; since each $(d\varphi_t)_p$ is an isomorphism, equality follows by comparing dimensions. Finally, a homothety preserving $D$ preserves $D^\perp$ and intertwines the orthogonal projections, whence $\varphi_t^*(g_q) = (\varphi_t^*g)_q = c(t)^2 g_q$.
\end{proof}

\subsection{Convergence of optimal controls for symmetric drift} We can now state the main results of this section. Throughout, let $(M,g)$ be a connected, complete Riemannian manifold, $D \subset TM$ a bracket-generating subbundle, and $X$ a time-dependent vector field satisfying Assumptions \ref{ass:flow} and \ref{ass:symmetry}; fix $x, y \in M$ and set $y' := \varphi_1^{-1}(y)$. Consider the hard-constrained problem
\begin{align}\label{eq:hardcontrol}
\begin{split}
    &\underset{(Y, \gamma)}\min \int_0^1g(Y(t),Y(t))\,dt \\
    \text{subject to}\;&\dot{\gamma}(t) = X(t, \gamma(t)) + Y(t)  \\
    &\gamma(0) = x, \; \gamma(1) = y, \; Y \in \Gamma(\gamma,D),
\end{split}
\end{align}
and the family of soft-constrained problems, indexed by $q \in \Npos$:
\begin{align}\label{eq:softcontrol}
\begin{split}
    &\underset{(Y, \gamma)}\min \int_0^1g_q(Y(t),Y(t))\,dt \\
    \text{subject to}\;&\dot{\gamma}(t) = X(t, \gamma(t)) + Y(t)  \\
    &\gamma(0) = x, \; \gamma(1) = y, \; Y \in \Gamma(\gamma,TM).
\end{split}
\end{align}

\begin{proposition}[Geodesic correspondence]\label{prop:geodesiccorrespondence}
The correspondence
\begin{equation}\label{eq:geodesiccorrespondence}
    \gamma(t) = \varphi_t\bigl(\xi(\hat{u}(t))\bigr), \qquad Y(t) = \dot{\gamma}(t) - X(t,\gamma(t))
\end{equation}
is a bijection between admissible pairs $(Y,\gamma)$ for \eqref{eq:softcontrol} and curves\\ $\xi \in \mathcal{C}_{x,y'} \cap H^1([0,1],M)$, under which
\begin{equation}
    \int_0^1 g_q(Y,Y)\,dt \;=\; \frac{2}{T}\,E_q(\xi);
\end{equation}
moreover, $Y \in \Gamma(\gamma,D)$ if and only if $\xi$ is horizontal, in which case $\int_0^1 g(Y,Y)\,dt = \frac{2}{T}E_\infty(\xi)$.
\end{proposition}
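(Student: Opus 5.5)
The plan is to compose two changes of variable: the interaction-picture map of Proposition~\ref{prop:interaction}, followed by the time reparametrisation $\hat{u}$. First I will apply Proposition~\ref{prop:interaction}. Admissible pairs $(Y,\gamma)$ for \eqref{eq:softcontrol} correspond bijectively to curves $\zeta \in H^1([0,1],M)$ with $\zeta(0)=x$ and $\zeta(1)=y'$, via $\zeta(t)=\varphi_t^{-1}(\gamma(t))$. In this correspondence $Y$ is determined by $\gamma$ through $Y=\dot\gamma - X(t,\gamma)$, and we have $\tilde Y=\dot\zeta$. By \eqref{eq:costidentity} and Lemma~\ref{lemma:conformalstructure},
\begin{equation*}
\int_0^1 g_q(Y,Y)\,dt=\int_0^1 c(t)^2\, g_q(\dot\zeta,\dot\zeta)\,dt .
\end{equation*}
Also, $Y(t)\in D_{\gamma(t)}$ if and only if $\dot\zeta(t)\in(\tilde D_t)_{\zeta(t)}=D_{\zeta(t)}$.

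Second, I will set $\zeta=\xi\circ\hat u$, i.e.\ $\xi=\zeta\circ\hat u^{-1}$. Since $\hat u$ is a $C^1$ diffeomorphism of $[0,1]$ with derivative bounded above and below, composition with $\hat u$ or $\hat u^{-1}$ preserves $H^1([0,1],M)$. To check this, test against a smooth $f$ and use the chain rule for $H^1$ functions composed with bi-Lipschitz $C^1$ maps. The composition also preserves the endpoints $0\mapsto0$ and $1\mapsto1$, so $\xi\in\mathcal C_{x,y'}\cap H^1$. This makes $(Y,\gamma)\mapsto\xi$ a bijection, with inverse given by \eqref{eq:geodesiccorrespondence}.

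For the cost, I compute $\dot\zeta(t)=\hat u'(t)\,\xi'(\hat u(t))$ with $\hat u'(t)=c(t)^{-2}/T$. Substituting into the integral gives
\begin{equation*}
\int_0^1 c^2\,\hat u'^2\, g_q(\xi'(\hat u),\xi'(\hat u))\,dt=\int_0^1 \frac{1}{T}\,\hat u'(t)\, g_q(\xi'(\hat u),\xi'(\hat u))\,dt ,
\end{equation*}
because $c^2\hat u'^2 = \hat u'/T$. The change of variables $u=\hat u(t)$ then turns the right-hand side into $\frac1T\int_0^1 g_q(\xi',\xi')\,du=\frac2T E_q(\xi)$.

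For horizontality, note that $\dot\zeta(t)\in D$ if and only if $\xi'(\hat u(t))\in D$, since $\hat u'>0$. Null sets are preserved under the bi-Lipschitz map $\hat u$, so "almost every $t$" transfers to "almost every $u$". On horizontal curves $g_q=g$ along $\dot\xi$, so the cost identity becomes $\int g(Y,Y)=\frac2T E_\infty(\xi)$.

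The main obstacle I expect is the regularity bookkeeping, rather than the algebra. Three points need care:
\begin{enumerate}
\item[(a)] justifying the chain rule $\frac{d}{dt}(\xi\circ\hat u)=\hat u'\,\xi'\circ\hat u$ almost everywhere for $H^1$ curves, which I will do in local charts as in Remark~\ref{rem:whitneychart};
\item[(b)] the change-of-variables formula for $L^1$ integrands under the $C^1$ diffeomorphism $\hat u$;
\item[(c)] confirming that the admissibility notion of Proposition~\ref{prop:interaction} already makes the pair $(Y,\gamma)$ determined by $\gamma$, so that the correspondence is genuinely bijective.
\end{enumerate}
These are standard, and I will treat them briefly.
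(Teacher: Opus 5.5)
Your proposal is correct and follows the paper's proof essentially step for step. You use Proposition~\ref{prop:interaction} together with Lemma~\ref{lemma:conformalstructure} to reduce the cost to $\int_0^1 c(t)^2 g_q(\dot\zeta,\dot\zeta)\,dt$, then substitute $\zeta=\xi\circ\hat{u}$ using the identity $c^2\hat{u}'^2=\hat{u}'/T$ and change variables $u=\hat{u}(t)$; your regularity points (a)--(c) only spell out what the paper asserts in one line.
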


\begin{proof}
By Proposition \ref{prop:interaction}, $(Y,\gamma)$ is admissible for \eqref{eq:softcontrol} if and only if $\zeta(t) := \varphi_t^{-1}(\gamma(t))$ lies in $H^1([0,1],M)$ with $\zeta(0) = x$, $\zeta(1) = y'$, and then, by the cost identity \eqref{eq:costidentity} and Lemma \ref{lemma:conformalstructure},
\begin{align}
\begin{split}
    \int_0^1 g_q(Y,Y)\,dt \;&=\; \int_0^1 (\tilde{g}_t)_q(\dot{\zeta},\dot{\zeta})\,dt \\ &=\; \int_0^1 c(t)^2\, g_q(\dot{\zeta},\dot{\zeta})\,dt.
\end{split}
\end{align}
Since $\hat{u}$ is a $C^1$ diffeomorphism of $[0,1]$ with $\dot{\hat{u}} = c^{-2}/T$ bounded away from $0$ and $\infty$, the substitution $\zeta(t) = \xi(\hat{u}(t))$ is a bijection of the stated curve classes; and with $\dot{\zeta} = \dot{\hat{u}}\,\xi'(\hat{u})$,
\begin{align}
\begin{split}
    c^2 g_q(\dot{\zeta},\dot{\zeta})\,dt \;&=\; c^2\,\frac{c^{-4}}{T^2}\,g_q(\xi',\xi')\big|_{\hat{u}(t)}\,dt \\ &=\; \frac{1}{T}\,g_q(\xi',\xi')\big|_{\hat{u}(t)}\,d\hat{u},
\end{split}
\end{align}
which integrates to $\frac{2}{T}E_q(\xi)$. Horizontality is preserved in both directions since $\tilde{D}_t = D$ and monotone reparametrisation does not affect the constraint; the hard-cost identity follows in the same way with $q$ replaced by the horizontal restriction.
\end{proof}

\begin{theorem}\label{thm:maincontroltheorem}
Under the standing assumptions of this section:
\begin{enumerate}
    \item[\textup{(i)}] for every $q \in \Npos$, the problem \eqref{eq:softcontrol} admits a minimiser $(Y_q,\gamma_q)$, corresponding to a minimising geodesic $\xi_q$ of $g_q$ from $x$ to $y'$;
    \item[\textup{(ii)}] there exist a subsequence $q_j \to \infty$ and a minimiser $\xi$ of $E_\infty$ over $\mathcal{C}_{x,y'}$ such that $\xi_{q_j} \to \xi$ in $H^1([0,1],M)$ and the pair $(Y,\gamma)$ obtained from $\xi$ via \eqref{eq:geodesiccorrespondence} solves the hard-constrained problem \eqref{eq:hardcontrol}, and $\gamma_{q_j} \to \gamma$ in $H^1([0,1],M)$;
    \item[\textup{(iii)}] the optimal value of \eqref{eq:softcontrol} equals $\frac{2}{T}\min_{\mathcal{C}_{x,y'}} E_q$ and increases, as $q \to \infty$, to the optimal value $\frac{1}{T}\,d_\infty(x,y')^2$ of \eqref{eq:hardcontrol};
    \item[\textup{(iv)}] with $Y_{q_j}$ and $Y$ the controls corresponding to $\xi_{q_j}$ and $\xi$ as in \textup{(i)--(ii)}, and $\varphi : M \hookrightarrow \R^N$ the Whitney embedding of Proposition \ref{prop:topchar}, the controls converge in $L^2$ along the embedding:
    \begin{equation}
        d\varphi_{\gamma_{q_j}(t)}\,Y_{q_j}(t) \;\longrightarrow\; d\varphi_{\gamma(t)}\,Y(t) \qquad \text{in } L^2([0,1],\R^N).
    \end{equation}
\end{enumerate}
\end{theorem}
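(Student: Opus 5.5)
The plan is to transport everything through the bijection of Proposition \ref{prop:geodesiccorrespondence} and then invoke Theorem \ref{thm:penalisedconvergence} on the pair of endpoints $(x,y')$.

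For (i), fix $q$. Since $(M,g_q)$ is complete, Hopf--Rinow provides a minimising geodesic $\xi_q$ of $g_q$ from $x$ to $y'$, which minimises $E_q$ over $\mathcal{C}_{x,y'}$. Because the correspondence is a bijection and scales the cost by the positive constant $2/T$, the image pair $(Y_q,\gamma_q)$ minimises \eqref{eq:softcontrol}. Its optimal value is $\frac{2}{T}\min E_q$.

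For (ii) and (iii), apply Theorem \ref{thm:penalisedconvergence} to $\{\xi_q\}$. This yields $q_j\to\infty$ and a minimiser $\xi$ of $E_\infty$ with $\xi_{q_j}\to\xi$ in $H^1([0,1],M)$. Horizontal $\xi$ correspond exactly to admissible pairs with $Y\in\Gamma(\gamma,D)$, with hard cost $\frac{2}{T}E_\infty(\xi)$. So minimality of $\xi$ among horizontal curves gives optimality of $(Y,\gamma)$ for \eqref{eq:hardcontrol}. The value statement in (iii) follows from $\min E_q\nearrow\min E_\infty=\frac12 d_\infty(x,y')^2$, established in the proof of Theorem \ref{thm:penalisedconvergence}.

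It remains to show $\gamma_{q_j}\to\gamma$ in $H^1$. This requires continuity of the map $\Phi:\xi\mapsto \varphi_t(\xi(\hat u(t)))$ from $H^1([0,1],M)$ to itself. I would factor $\Phi$ into two steps:
\begin{enumerate}
\item[(a)] the reparametrisation $\xi\mapsto\xi\circ\hat u$;
\item[(b)] the pointwise application of the smooth map $F(t,p)=\varphi_t(p)$.
\end{enumerate}
For (a), fix $f\in C^\infty(M)$. The derivative $(f\circ\xi)'(\hat u)\,\dot{\hat u}$ converges in $L^2$ by change of variables, since $\dot{\hat u}$ is continuous and bounded above and below. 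For (b), use the Whitney charts of Remark \ref{rem:whitneychart}(3): uniform convergence localises everything to compact chart domains. There the chain rule gives
\[
\tfrac{d}{dt} f(F(t,\zeta_j)) = \partial_t(f\circ F)(t,\zeta_j)+\partial_a(f\circ F)(t,\zeta_j)\dot\zeta_j^a .
\]
In the second term the coefficients converge uniformly and $\dot\zeta_j^a$ converges strongly in $L^2$, exactly as in the last paragraph of the proof of Proposition \ref{thm:limitweakdiff}. The first term converges uniformly. Hence $\rho^1_{1,f}$ and $\rho^1_{0,f}$ both tend to zero. I expect this continuity step to be the main technical point, though it is routine given the chart machinery; the chart images of the $\zeta_j$ must be controlled jointly in $t$, which is handled by compactness of $[0,1]\times\overline{U}_k$.

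For (iv), write $d\varphi\,Y_{q_j}=\frac{d}{dt}(\varphi\circ\gamma_{q_j}) - d\varphi\,X(t,\gamma_{q_j})$. The first term converges in $L^2([0,1],\R^N)$ by the $H^1$ convergence just proved, applied componentwise to $f^l=\pi_l\circ\varphi$. The second term converges uniformly, since $d\varphi\circ X$ is continuous on the compact set $[0,1]\times K$. Here $K$ is a compact set containing all the $\gamma_{q_j}$, which exists by uniform convergence. The difference converges to $d\varphi\,(\dot\gamma-X(t,\gamma))=d\varphi\,Y$ in $L^2([0,1],\R^N)$.
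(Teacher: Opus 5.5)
Your proposal is correct and follows the paper's proof essentially step for step. You transport through the correspondence of Proposition~\ref{prop:geodesiccorrespondence}, apply Theorem~\ref{thm:penalisedconvergence} with endpoints $x,y'$, obtain $\gamma_{q_j}\to\gamma$ in $H^1([0,1],M)$ by composing with $\hat{u}$ and then with $(t,p)\mapsto\varphi_t(p)$ via the chain rule in the charts of Remark~\ref{rem:whitneychart}, and prove (iv) from the same splitting $d\varphi\,Y_{q_j}=\frac{d}{dt}(\varphi\circ\gamma_{q_j})-d\varphi\,X(t,\gamma_{q_j})$; the only difference is that you spell out the continuity of the reparametrisation and flow-composition steps, which the paper asserts in a single line.
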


\begin{proof}
Claim (i) follows from Proposition \ref{prop:geodesiccorrespondence} and the Hopf--Rinow theorem, since $(M,g_q)$ is complete (Section \ref{sec:convergence}). 

For (ii), Theorem \ref{thm:penalisedconvergence} applied on $(M,g,D)$ with endpoints $x, y'$ produces $q_j \to \infty$ and a minimiser $\xi$ of $E_\infty$ with $\xi_{q_j} \to \xi$ in $H^1([0,1],M)$. Composition with the fixed diffeomorphism $\hat{u}$ preserves $H^1$-convergence, so $\zeta_{q_j} := \xi_{q_j}\circ\hat{u} \to \zeta := \xi\circ\hat{u}$ in $H^1([0,1],M)$; and $\gamma_{q_j} = \varphi_t \circ \zeta_{q_j} \to \varphi_t \circ \zeta = \gamma$ in $H^1([0,1],M)$ by the smoothness of the map $(t,p)\mapsto\varphi_t(p)$, the chain rule, and the chart-local characterisation of convergence in Remark \ref{rem:whitneychart}. The pair $(Y,\gamma)$ solves \eqref{eq:hardcontrol} by the cost identity and the horizontality correspondence of Proposition \ref{prop:geodesiccorrespondence}. 

Claim (iii) follows from the identity for the optimal values, the monotone convergence $\min E_q \nearrow \min E_\infty$ of Theorem \ref{thm:penalisedconvergence}, and the identity $\min E_\infty = \frac{1}{2}d_\infty(x,y')^2$ established in its proof. Finally, for (iv), write $d\varphi(Y_{q_j}) = \frac{d}{dt}(\varphi\circ\gamma_{q_j}) - d\varphi_{\gamma_{q_j}}\bigl(X(t,\gamma_{q_j})\bigr)$: the first term converges to $\frac{d}{dt}(\varphi\circ\gamma)$ in $L^2$ by the definition of the $H^1$ topology, and the second converges uniformly to $d\varphi_{\gamma}\bigl(X(t,\gamma)\bigr)$, by the smoothness of $X$ and $\varphi$, the uniform convergence $\gamma_{q_j} \to \gamma$, and the compactness of a common set containing all images.
\end{proof}

\begin{remark}[Symmetric drift]\label{rem:killingdrift}
If $\mu \equiv 0$ --- that is, each $X_t$ is a Killing field of $g$ preserving $\Gamma(D)$ --- then $c \equiv 1$, $T = 1$ and $\hat{u}(t) = t$, and the correspondence \eqref{eq:geodesiccorrespondence} is simply $\gamma(t) = \varphi_t(\xi(t))$: the drift problem \emph{is} the problem of Section \ref{sec:convergence}, with the target rotated to $\varphi_1^{-1}(y)$ and no reparametrisation at all. This is, in fact, the operative case of Assumption \ref{ass:symmetry} away from flat space. 

If $\mu \not\equiv 0$, then $c(t) \neq 1$ for some $t$, so that $\varphi_t$ is a \emph{non-isometric} homothety of the complete Riemannian manifold $(M,g)$. It follows from \cite[Lemma 2]{IshiharaObata1955} that $(M,g)$ must be flat (i.e. the Riemann curvature tensor vanishes identically). Replacing $\varphi_t$ by its inverse if necessary, we may assume $\varphi_t$ to be a contracting homothety. Since a contracting homothety scales the length of the shortest noncontractible loop, the fundamental group must be trivial. By the Killing--Hopf theorem \cite[Chapter 12]{Lee2018}, $(M,g)$ is then isometric to the Euclidean space $\R^n$. 

On Euclidean space, every homothety is an affine similarity. Thus, on any complete manifold other than flat $\R^n$, condition (ii) forces $\mu \equiv 0$, and the conformal allowance in Assumption \ref{ass:symmetry} is a costless extension whose content is confined to dilational drifts on Euclidean space.
\end{remark}

\subsection{A practical example from quantum optimal control}\label{sec:quantumexample}

Consider the manifold $M = SU(2^n)$ with the bi-invariant metric $g$ induced by the negative of the Killing form. Up to positive scaling, this metric is induced by the $\mathrm{Ad}$-invariant inner product $\langle A, B\rangle := -\operatorname{tr}(AB)$ on $\mathfrak{g} := \mathfrak{su}(2^n).$

We recall the standard terminology. Write $\sigma_1, \sigma_2, \sigma_3$ for the Pauli matrices and $\sigma_0 = I$ for the $2\times 2$ identity. A \emph{Pauli string} on $n$ qubits is a tensor product
\begin{equation}
    P = \sigma_{j_1} \otimes \sigma_{j_2} \otimes \cdots \otimes \sigma_{j_n}, \qquad j_1, \dots, j_n \in \{0,1,2,3\},
\end{equation}
and its \emph{support} is the set $\mathrm{supp}(P) := \{ i : j_i \neq 0 \}$ of qubits on which it acts non-trivially. The $4^n$ Pauli strings form a basis of the Hermitian $2^n \times 2^n$ matrices, and a Hermitian operator is called \emph{$k$-local} if it is a linear combination of Pauli strings $P$ with $|\mathrm{supp}(P)| \leq k$. Thus a $1$-local operator is a sum of single-qubit terms and a $2$-local operator additionally allows pairwise interactions. It is a standard fact (see for instance \cite{d2021introduction}) that 2-local Hamiltonians allow one to generate any unitary evolution. 
\thanks{Note that $k$-locality constrains the number of qubits addressed by each term, not their spatial proximity. However, it is straightforward to modify the above argument to impose additionally the constraint that only spatially adjacent qubits can interact.}

Let $\mathfrak{d} \subset \mathfrak{g}$ be the real span of the elements $-iP$, where $P$ ranges over the Pauli strings with $|\mathrm{supp}(P)| \leq 2$: the \emph{2-local} directions. Consider the right-invariant control system
\begin{equation}
    \dot{U}(t) = \bigl(A_d + A_c(t)\bigr)U(t), \qquad A_c(t) \in \mathfrak{d},
\end{equation}
where $A_d = -iH_d$ and $A_c(t) = -iH_c(t)$ for Hermitian drift and control Hamiltonians $H_d$ and $H_c(t)$, the latter 2-local. In the notation of this section, $X(U) = A_dU$, $Y(t) = A_c(t)U(t)$, and $D_U = \mathfrak{d}\cdot U$ is a right-invariant subbundle, bracket-generating by the universality of two-qubit interactions; the cost
\begin{equation}
    \int_0^1 g(Y,Y)\,dt = \int_0^1 \langle A_c, A_c\rangle\,dt
\end{equation}
is the control energy, as in the complexity-geometry programme \citep{nielsen2006geometric, dowling2006} and in \cite{lawrence2025}.

This control problem satisfies the hypotheses of Theorem \ref{thm:maincontroltheorem} exactly, with each assumption expressing a structural feature of the physical system. Assumption \ref{ass:flow} holds because the defining ODE is linear, so the evolution operator is globally defined; the flow consists of the left translations $\varphi_t = L_{a_t}$, $a_t = e^{tA_d}$. Since left translations are isometries of a bi-invariant metric, Assumption \ref{ass:symmetry}(ii) holds with $\mu \equiv 0$: the drift is a Killing field, Remark \ref{rem:killingdrift} applies, and $T = 1$. For the bundle condition, a direct computation gives $(d\varphi_t)^{-1}(D)\big|_\zeta = (\mathrm{Ad}_{a_t^{-1}}\mathfrak{d})\cdot\zeta$, so Assumption \ref{ass:symmetry}(i) reads $[A_d, \mathfrak{d}] \subseteq \mathfrak{d}$. This holds whenever $H_d$ is 1-local: if $A$ is a Pauli string with $\mathrm{supp}(A) = \{i\}$ and $Q$ is a Pauli string with $|\mathrm{supp}(Q)| \leq 2$, then $[A,Q]$ vanishes when $i \notin \mathrm{supp}(Q)$ and is supported in $\mathrm{supp}(Q)$ otherwise; in either case $[A,Q]$ is again 2-local, and the claim follows by bilinearity.

It is worth emphasising that Assumption \ref{ass:symmetry}(i) requires the drift to \emph{normalise} $\mathfrak{d}$, which is a strictly stronger condition than membership $A_d \in \mathfrak{d}$. Indeed, a 2-local drift generally fails it: taking $n = 3$ and writing $X = \sigma_1$, $Y = \sigma_2$, $Z = \sigma_3$, both $-i\,Z \otimes Z \otimes I$ and $-i\,I \otimes X \otimes X$ lie in $\mathfrak{d}$, whereas
\begin{equation}
    \bigl[\,-i\,Z \otimes Z \otimes I,\; -i\,I \otimes X \otimes X\,\bigr] \;=\; Z \otimes [Z,X] \otimes X \;=\; 2\bigl(-i\,Z \otimes Y \otimes X\bigr)
\end{equation}
has support $\{1,2,3\}$ and is therefore 3-local. This is precisely the mechanism that makes $\mathfrak{d}$ bracket-generating, so the failure is not incidental; for such drifts the reduction of this section does not apply, and we return to them in Subsection \ref{sec:generalcase}. 

Minimising the control energy subject to locality constraints on the Hamiltonian is thus a prototypical example of the type of control problem studied in this section. Indeed, the work presented in this paper was motivated initially by studying this quantum control problem using the techniques of (sub)-Riemannian geometry.

We now study the convergence of the solutions of the soft-constrained problem to solutions of the hard-constrained problem in this setting. Let $P_{\mathfrak{d}}$ and $P_{\mathfrak{d}^\perp}$ denote the $\langle\cdot,\cdot\rangle$-orthogonal projections onto $\mathfrak{d}$ and $\mathfrak{d}^\perp$. The hard-constrained problem \eqref{eq:hardcontrol} asks for controls $A_c(t) \in \mathfrak{d}$ of minimal energy $\int_0^1 \langle A_c, A_c\rangle\,dt$ steering $x$ to $y$; the soft-constrained problems \eqref{eq:softcontrol} instead admit arbitrary $A_c(t) \in \mathfrak{g}$ and penalise the non-2-local component,
\begin{equation}
    \int_0^1 \Bigl( \bigl\langle P_{\mathfrak{d}}A_c,\, P_{\mathfrak{d}}A_c\bigr\rangle + q\,\bigl\langle P_{\mathfrak{d}^\perp}A_c,\, P_{\mathfrak{d}^\perp}A_c\bigr\rangle \Bigr)\,dt,
\end{equation}
which is precisely the cost induced by the penalised metric \eqref{eq:gq} under right invariance. By Proposition \ref{prop:geodesiccorrespondence} and Theorem \ref{thm:maincontroltheorem} (with $T = 1$), the soft problem at parameter $q$ is solved by a minimising geodesic $\xi_q$ of $g_q$ from $x$ to the rotated target $y' = \varphi_1^{-1}(y) = e^{iH_d}y$, with physical trajectory $\gamma_q(t) = e^{-iH_dt}\xi_q(t)$ and control recovered from $Y = \dot{\gamma} - X$. As $q \to \infty$, the $\xi_q$ subconverge strongly in $H^1$ to a sub-Riemannian minimiser, the controls converge in $L^2$ along the embedding by Theorem \ref{thm:maincontroltheorem}(iv), and the optimal energies increase to $d_\infty(x, e^{iH_d}y)^2$. Moreover, the guarantees of Subsection \ref{sec:subsequenceselection} apply to the full sequence: whether or not a distinguished subsequence has been selected, the non-2-local component of the optimal control vanishes in $L^2$ at the explicit rate $O(q^{-1/2})$ (Proposition \ref{prop:aprioribound}), and indeed $q\int_0^1 |P_{\mathfrak{d}^\perp}A_c^q|^2\,dt \to 0$ (Corollary \ref{cor:fullnormal}).

In the absence of drift, this soft-constrained problem has been studied extensively. The penalised inner products above are precisely the \emph{penalty metrics} of the complexity-geometry programme \citep{nielsen2006geometric, dowling2006}, introduced so that lengths of $g_q$-geodesics capture the difficulty of synthesising a unitary from local interactions, and the hard-constrained limit is the sub-Riemannian (Carnot--Carath\'{e}odory) geometry of the 2-local distribution. The limit $q \to \infty$ in this setting was investigated by Shizume et al.\ \cite{shizume}, by different methods. The results of Section \ref{sec:convergence} strengthen such statements from convergence of distances to strong $H^1$-convergence of the minimising trajectories themselves, together with quantitative control of the constraint violation and $L^2$-convergence of the controls. Theorem \ref{thm:maincontroltheorem} extends the programme to systems with a 1-local drift term. We note that this example instantiates the invariant sub-Riemannian geometry induced on a compact group by its Killing form, which has been studied in \cite{BoscainRossi2008}. 

\subsection{Arbitrary (non-symmetric) drift}\label{sec:generalcase}

We conclude this work by sketching how the situation changes when the symmetry condition (Assumption \ref{ass:symmetry}) is dropped. This symmetry assumption is somewhat restrictive: as we saw in Subsection \ref{sec:quantumexample}, it arises on $SU(2^n)$ as soon as the drift is 2-local. We will defer the full proof to a subsequent work; in this section we content ourselves with the rough details. 

The natural way to autonomise \eqref{eq:Ytildeopt} is to promote time to a coordinate. On the augmented manifold $M \times \R$, with points $(p,s)$, define the metric and subbundle
\begin{equation}
    h\big|_{(p,s)} := \tilde{g}_s\big|_p \oplus ds^2, \qquad \mathcal{E}\big|_{(p,s)} := (\tilde{D}_s)\big|_p \oplus \R\,\partial_s.
\end{equation}
For $q \in \Npos$, the energies $E^h_q$ associated to the penalisation of $h$ with respect to $\mathcal{E}$, act on curves $\eta = (\zeta, s) : [0,1] \to M\times\R$. Note that $\mathcal{E}$ is bracket-generating whenever each $\tilde{D}_s$ is, and that the $s$-direction is never penalised. A curve of the form $\eta(t) = (\zeta(t), t)$ satisfies
\begin{equation}
    2E^h_q(\eta) - 1 \;=\; \int_0^1 (\tilde{g}_t)_q(\dot{\zeta},\dot{\zeta})\,dt,
\end{equation}
and $\eta$ is $\mathcal{E}$-horizontal if and only if $\dot{\zeta}(t) \in (\tilde{D}_t)_{\zeta(t)}$ for almost every $t$. The soft and hard problems are therefore equivalent to the minimisation of $E^h_q$ and $E^h_\infty$ over $\mathcal{A}$, where $\mathcal{A}$ denotes the class of curves of the form $\eta(t) = (\zeta(t), t)$ and $\zeta(0) = x$ and $\zeta(1) = y' := \varphi_1^{-1}(y)$.

The crucial point is that, in the absence of Assumption \ref{ass:symmetry}, these problems are \emph{not} equivalent to the unconstrained geodesic problem on $(M\times\R, h)$. Since $h$ depends on the coordinate $s$ whenever the drift is not symmetric, an unconstrained minimiser is free to traverse the $s$-direction non-uniformly. In particular, unconstrained geodesics can remain constant on $M$, traversing in the $s$ direction to a value of $s_1$ such that $\tilde{g}_s$ is small, then traverse from $(x, s_1)$ to $(\varphi_1^{-1}(y), s_1)$ on the slice $M\times \{s_1\}$ whose diameter (with respect to $h$) is small, and then move again via the constant (on $M$) path to $(\varphi_1^{-1}(y), 1)$.

Its $s$-component need not be affine, and hence its projection to $M$ is then meaningless as a control trajectory, the $s$-coordinate being time. Even simple examples, such as a dilational drift on $\R$, exhibit a strict gap between the unconstrained minimum and the minimum over $\mathcal{A}$. The constraint $s(t) = t$ cannot be avoided.

Let us sketch out how the convergence theory of Section \ref{sec:convergence} can be adapted to this more general setting.
\begin{itemize}
    \item Two hypotheses are needed: (a) $(M\times\R, h)$ is complete; (b) the hard-constrained problem is feasible with finite cost. These replace the completeness of $(M,g)$ and the Chow--Rashevskii theorem guaranteeing the existence of a finite-energy competitor, neither of which is available over the constrained class. Both hold automatically when $X = 0$, and (a) holds whenever $M$ is compact, or when $c^{-1}g \leq \tilde{g}_t \leq c\,g$ uniformly in $t$.
    \item The $\Gamma$-convergence of the extended energies in the uniform topology (Theorem \ref{thm:gammaC0}) is expected to carry over verbatim, with $\mathcal{C}_{x,y}$ replaced by $\mathcal{A}$ throughout.
    \item Under (b), minimising families satisfy a uniform energy bound, and the Arzel\`{a}--Ascoli argument of Proposition \ref{thm:limitconstruct} produces a uniformly convergent subsequence; since $\mathcal{A}$ is closed under uniform convergence, the limit remains in $\mathcal{A}$.
    \item The most significant change is that the existence of minimisers of the penalised problems no longer follows from the Hopf--Rinow theorem. It must instead be established by the direct method over $\mathcal{A}$, using (a).
    \item The upgrade to strong $H^1$ convergence, via energy convergence and the Radon--Riesz property, is unaffected.
    \item The conclusion takes the same form as Theorem \ref{thm:maincontroltheorem}: penalised minimisers exist, subconverge strongly in $H^1$ to a solution of the hard-constrained problem, and the optimal values converge. The limiting object, however, is a constrained minimiser on the augmented manifold, rather than a geodesic of any fixed metric on $M$.
\end{itemize}

A more natural approach is to work directly with the time-dependent Lagrangians $L^q_t(\zeta,\dot{\zeta}) = \frac{1}{2}(\tilde{g}_t)_q(\dot{\zeta},\dot{\zeta})$, rather than lifting to $M \times \R$. The methods of Section \ref{sec:convergence} adapt to this setting with minor changes: the proofs there use only the joint continuity and local uniform ellipticity of the metric coefficients, which the time-dependent coefficients of $(\tilde{g}_t)_q$ inherit from Assumption \ref{ass:flow}. In particular, the associated action functionals $\Gamma$-converge, with respect to uniform convergence, to the constrained action $\frac{1}{2}\int_0^1 \tilde{g}_t(\dot{\zeta},\dot{\zeta})\,dt$ on curves with $\dot{\zeta}(t) \in (\tilde{D}_t)_{\zeta(t)}$ for almost every $t$. 

What is lost is reparametrisation invariance, and with it the constant-speed and distance statements of Theorem \ref{thm:penalisedconvergence}. Adapting these results to the time-dependent case will be the subject of future work. 
\section*{Acknowledgments}

The authors would like to thank Andrew Lewis, Richard Murray, Richard Montgomery and Andrei Agrachev for their comments. This work was partly funded by the European Union under the project ROBOPROX (reg.\ no.\ CZ.02.01.01/00/22\_008/0004590).

\section*{AI tool use declaration}

While the preprint that was submitted to arXiv on the 24th of April was almost entirely human in nature, in producing this revision the authors have made more substantial use of AI tools. In accordance with the recommendations of the Leiden Declaration on Artificial Intelligence and Mathematics\footnote{\url{https://leidendeclaration.ai/}}, the authors feel it is important to declare the use of AI tools, and to state that all proofs have been independently verified by the authors. The authors accept full responsibility for the content of the publication. 

The tools used were the following: OpenAI's GPT-5 was used for literature search, Anthropic's Claude Sonnet 4.6 was used for the formalisation of the ``manifold interaction picture'' and minor copyediting. OpenAI's GPT-5.5 discovered two non-trivial errors in the proof. Claude Fable 5 was used to patch these errors, culminating in the present work, and ChatGPT-5.6 was used for final proofreading. 

\bibliographystyle{alpha}
\bibliography{refs}

\end{document}